\documentclass[12pt]{amsart}

\usepackage{amsmath}
\usepackage{amssymb}
\usepackage{amsfonts}
\usepackage{amsthm}
\usepackage{enumerate}
\usepackage{hyperref}
\usepackage{paralist}
\usepackage{color}
\usepackage{thmtools}
\usepackage{changepage}
\usepackage{graphicx}
\numberwithin{equation}{section}

\newcommand{\Pu}{\mathrm{P}}
\newcommand{\NN}{\mathbb{N}}
\newcommand{\Df}{\mathfrak{D}}
\newcommand{\st}{\mathrm{stel}}
\newcommand{\df}{\mathfrak{d}}
\newcommand{\floor}[1]{\lfloor #1 \rfloor}

\newcommand{\ZZ}{\mathbb{Z}}

\DeclareMathOperator{\exc}{exc}
\DeclareMathOperator{\link}{link}
\DeclareMathOperator{\sd}{sd}

\textheight=600pt
\textwidth=435pt
\oddsidemargin=17pt
\evensidemargin=17pt

\theoremstyle{plain}
\newtheorem{thm}{Theorem}[section]

\newtheorem{lem}[thm]{Lemma}
\newtheorem{cor}[thm]{Corollary}

\newtheorem{ques}[thm]{Question}

\newtheorem{prob}[thm]{Problem}

\newtheorem{conj}[thm]{Conjecture}

\theoremstyle{definition}
\newtheorem{dfn}[thm]{Definition}

\newtheorem{exmp}[thm]{Example}

\newtheorem{rem}[thm]{Remark}

\newtheorem{dfns-rems}[thm]{Definitions and Remarks}
\newtheorem{notas-rems}[thm]{Notations and Remarks}
\newtheorem{exmps-rems}[thm]{Examples and Remarks}

\newlength\Thmindent
\setlength\Thmindent{20pt}

\renewcommand{\emph}{\textbf}

\begin{document}

\title[Local $h$-vectors of Quasi-Geometric and Barycentric Subdivisions]{Local $h$-vectors of Quasi-Geometric and Barycentric Subdivisions}

\author{Martina Juhnke-Kubitzke}

\address{Martina Juhnke-Kubitzke \& Richard Sieg, Universit\"at Osnabr\"uck, FB Mathematik/ Informatik, 49069 Osnabr\"uck, Germany}
\email{juhnke-kubitzke@uos.de, richard.sieg@uos.de}

\author{Satoshi Murai}
\address{
Satoshi Murai,
Department of Pure and Applied Mathematics,
Graduate School of Information Science and Technology,
Osaka University,
Suita, Osaka, 565-0871, Japan}
\email{s-murai@ist.osaka-u.ac.jp}

\author{Richard Sieg}

%

\begin{abstract}
In this paper, we answer two questions on local $h$-vectors, which were asked by Athanasiadis.
First, we characterize all possible local $h$-vectors of quasi-geometric subdivisions of a simplex.
Second, we prove that the local $\gamma$-vector of the barycentric subdivision of any CW-regular subdivision of a simplex is nonnegative. Along the way, we derive a new recurrence formula for the derangement polynomials.
\end{abstract}
 \subjclass[2000]{05E45, 05A05}

\keywords{local $h$-vector, $\gamma$-vector, barycentric subdivision, quasi-geometric subdivision}
 \thanks{The first and the third author were partially supported by the German Research Council DFG-GRK~1916. 
 The second author was partially supported by JSPS KAKENHI JP16K05102.}
 
\maketitle

\section{Introduction}
The classification of face numbers of (triangulated) spaces is an important and central topic not only in algebraic and geometric combinatorics but also in other fields, as e.g., commutative and homological algebra  and discrete, algebraic and toric geometry. The studied classes of spaces comprise abstract simplicial complexes, triangulated spheres and (pseudo)manifolds but also not necessarily simplicial objects such as (boundaries of) polytopes and Boolean cell complexes. In $1992$,  Stanley \cite{sta_sub} introduced the so-called \emph{local $h$-vector} of a topological subdivision of a $(d-1)$-dimensional simplex as a tool to study face numbers of subdivisions of simplicial complexes. His original motivation was the question, posed by Kalai and himself, if the (classical) $h$-vector increases under subdivision of a Cohen-Macaulay complex. Using local $h$-vectors Stanley could provide an affirmative answer to this question for so-called \emph{quasi-geometric subdivisions}. The crucial property of those, that he used, is that their local $h$-vectors are nonnegative; a property, which is no longer true if one considers arbitrary topological subdivisions. 

Complementing results by Chan \cite{chan1994subdivisions}, one of our main results shows that~--~except for the conditions that are true for any local $h$-vector~--~ nonnegativity already characterizes local $h$-vectors of quasi-geometric subdivisions entirely. More precisely, we show the following:

\begin{thm}\label{thm:charaQuasi}
Let $\ell=(\ell_0,\ldots,\ell_d)\in\ZZ^{d+1}$. The following conditions are equivalent:
\begin{itemize}
\item[(1)] There exists a quasi-geometric subdivision $\Gamma$ of the $(d-1)$-simplex such that the local $h$-vector of $\Gamma$ is equal to $\ell$. 
\item[(2)] $\ell$ is symmetric (i.e., $\ell_i=\ell_{d-i}$ for $0\leq i\leq d$), $\ell_0=0$ and $\ell_i\geq 0$ for $1\leq i\leq d-1$.
\end{itemize}
\end{thm}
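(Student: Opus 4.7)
The implication $(1)\Rightarrow(2)$ will follow from classical results of Stanley on local $h$-vectors: symmetry holds for any topological subdivision (by the reciprocity for local $h$-polynomials), the vanishing $\ell_0=0$ is immediate from the inclusion--exclusion definition of $\ell_V(\Gamma,x)$, and nonnegativity of $\ell_i$ for $1\le i\le d-1$ is Stanley's main theorem from \cite{sta_sub} applied in the quasi-geometric setting. I would therefore devote most of the argument to the nontrivial direction $(2)\Rightarrow(1)$, where a vector $\ell$ satisfying (2) must be realized as the local $h$-vector of some quasi-geometric subdivision of $\Delta^{d-1}$.

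The strategy for $(2)\Rightarrow(1)$ is constructive and proceeds in two steps. Step one is to exhibit a family of \emph{elementary} quasi-geometric subdivisions $\Gamma^{(i)}$, one for each $1\le i\le \lfloor d/2\rfloor$, whose local $h$-vectors realize the symmetric ``unit'' vectors: $\ell(\Gamma^{(i)})=e_i+e_{d-i}$ for $i<d/2$ and $\ell(\Gamma^{(d/2)})=e_{d/2}$ when $d$ is even. Candidates are stellar subdivisions placing a single new vertex in the relative interior of a suitably chosen face, or near-cone constructions; the local $h$-vector can then be computed from the formula
\[
\ell_V(\Gamma,x) \;=\; \sum_{W\subseteq V}(-1)^{|V\setminus W|}\,h(\Gamma_W,x).
\]
Step two is a \emph{gluing lemma}: given two quasi-geometric subdivisions $\Gamma_1,\Gamma_2$ of $\Delta^{d-1}$, one can produce a quasi-geometric subdivision whose local $h$-vector equals $\ell(\Gamma_1)+\ell(\Gamma_2)$. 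Combining these two steps, any target $\ell$ satisfying~(2) is written as a nonnegative integer combination of the elementary symmetric vectors, and by iterated gluing we obtain a quasi-geometric subdivision with local $h$-vector $\ell$. An induction on $\sum_i \ell_i$, with the trivial subdivision $\Gamma=\Delta^{d-1}$ serving as the base case, organizes the construction cleanly.

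The hard part will be the gluing lemma, and ensuring that each step preserves the quasi-geometricity condition (every face of $\Gamma$ has its carrier containing a vertex set of size at most $d$, roughly speaking; equivalently, the carrier map behaves well on closed faces). A naive superposition of two subdivisions typically destroys this property. I would instead glue along a single interior face: pick an interior face $F$ of $\Gamma_1$ isomorphic to a lower-dimensional simplex, and further subdivide $F$ (together with its star) by the analogue of $\Gamma_2$. To see that local $h$-vectors add, I would use Stanley's locality formula
\[
h(\Gamma,x) \;=\; \sum_{F\in\Delta}\ell_F(\Gamma_F,x)\,h(\mathrm{link}_\Delta F,x),
\]
together with the behavior of the carrier map under this operation, so that the extra contribution to $\ell_V(\Gamma,x)$ is exactly $\ell(\Gamma_2,x)$. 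Verifying that quasi-geometricity is preserved under this local re-subdivision is the key technical point; once this is established, assembling the final quasi-geometric subdivision realizing a prescribed $\ell$ reduces to bookkeeping.
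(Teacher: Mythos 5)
Your reduction of the problem to the direction $(2)\Rightarrow(1)$ is correct, and the high-level plan (realize elementary symmetric vectors, then combine) is a reasonable instinct. But there are two genuine gaps, and they lie precisely where the paper's real work is done.

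First, the proposed \emph{gluing lemma} — given quasi-geometric subdivisions $\Gamma_1,\Gamma_2$ of $2^V$, produce a quasi-geometric $\Gamma$ with $\ell_V(\Gamma,x)=\ell_V(\Gamma_1,x)+\ell_V(\Gamma_2,x)$ — is not a known fact, and the sketch (``further subdivide an interior face of $\Gamma_1$ by the analogue of $\Gamma_2$'') does not yield it. Local $h$-vectors are not additive under composition of subdivisions; Stanley's locality formula gives a convolution-type recurrence, not a sum, and subdividing an interior facet of $\Gamma_1$ contributes terms $\ell_F(\Gamma_F,x)\,h(\link F,x)$ for all faces $F$ that get further subdivided, not just a single additive correction. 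You would have to prove additivity for the specific operation you use, and it is unclear that any operation yields additivity in the generality you claim. Second, you give no concrete construction of quasi-geometric subdivisions of $\Delta^{d-1}$ realizing $e_i+e_{d-i}$; for $i\ge 2$ these have $\ell_1=0$, and it is not obvious how to produce them directly.

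The paper's route is quite different and sidesteps both issues. It works with Chan's three explicit operations on subdivisions: stellar subdivision of a facet (O1), which adds the plateau $(0,1,\ldots,1,0)$; pushing a boundary ridge into the interior (O2), which subtracts an interior plateau; and joining with a subdivided edge (O3), which pads the local $h$-vector with zeros on each end and raises the dimension of the simplex by two. Operations (O1) and (O3) preserve quasi-geometricity, but (O2) does not in general; the paper's key new contribution (Lemma~\ref{lem:O4}) is that (O2) followed immediately by a stellar subdivision of the new facet \emph{is} quasi-geometric and adds $e_1+e_{d-1}$ to the local $h$-vector. With these pieces, the construction starts from a $2$- or $3$-simplex (according to the parity of $d$), builds the middle coefficient with (O1), then alternates (O3) and the repaired operation to extend outward and fill in the remaining coefficients. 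In short: rather than prove a gluing lemma in fixed dimension $d-1$, the paper builds up the dimension incrementally, which makes each local $h$-vector change a simple, verifiable addition. If you want to rescue your plan, you would essentially need to rediscover this repaired operation (or an equivalent), since it is the one nontrivial step that makes (O2)'s subtraction usable while preserving quasi-geometricity.
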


We want to remark, that it already follows from \cite{sta_sub} that the local $h$-vector of any quasi-geometric subdivision satisfies (2). To prove Theorem \ref{thm:charaQuasi}, it therefore suffices to construct a quasi-geometric subdivision $\Gamma$ having a prescribed vector $\ell=(\ell_0,\ldots,\ell_d)\in\ZZ^{d+1}$, satisfying (2), as its local $h$-vector. For this, we will extend constructions by Chan \cite{chan1994subdivisions} who provided characterizations for  local $h$-vectors of regular and topological subdivisions.

As the local $h$-vector is symmetric \cite{sta_sub} it makes sense to define a \emph{local $\gamma$-vector}, which was introduced by Athanasiadis in \cite{athanasiadis2012flag} and  is defined in the same way as is the usual $\gamma$-vector for homology spheres. The central conjecture for local $\gamma$-vectors is the following, due to Athanasiadis \cite[Conjecture~5.4]{athanasiadis2012flag}.

\begin{conj} \label{conj:gamma}
The local $\gamma$-vector of a flag vertex-induced homology subdivision of a simplex is nonnegative.
\end{conj}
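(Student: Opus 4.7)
The plan would be to attempt a combinatorial induction on the dimension $d$, combining Stanley's formula relating local $h$-polynomials to $h$-polynomials of restrictions with the good behaviour of the hypotheses ``flag'', ``vertex-induced'' and ``homology'' under passage to links and restrictions. Concretely, I would first apply Möbius inversion to Stanley's identity
\[ h(\Gamma_W,t)=\sum_{F\subseteq W}\ell_F(\Gamma_F,t)(1-t)^{|W|-|F|} \]
to express $\ell_V(\Gamma,t)$ as an alternating sum involving $h$-polynomials of restrictions $\Gamma_W$ together with strictly smaller local $h$-polynomials $\ell_F(\Gamma_F,t)$ with $F\subsetneq V$. Because the three hypotheses descend to each restriction $\Gamma_F$, the induction hypothesis gives $\gamma$-nonnegativity of all the $\ell_F(\Gamma_F,t)$, and the task reduces to understanding how the $\gamma$-expansions of the $h$-polynomials $h(\Gamma_W,t)$ interact in the alternating sum.

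The second step is to inject flagness globally. Each $\Gamma_W$ is still a flag vertex-induced homology subdivision of the simplex on $W$, and by coning over its boundary one obtains an associated flag homology sphere whose $h$-polynomial is controlled (in known special cases) by Gal's conjecture. I would then try to adapt the combinatorial models that have succeeded for flag spheres — Nevo–Petersen style pair-partitionings, Stapledon's equivariant local $h$-technology, or $cd$-index positivity — to produce a non-recursive description of the $\gamma_i$ of $\Gamma$ itself. A natural target is a formula parametrizing $\gamma_i$ by pairs $(F,S)$, where $F$ is an interior face of $\Gamma$ and $S$ is a set of $i$ pairwise non-adjacent interior vertices in the link of $F$, with the flag hypothesis ensuring the independence needed for nonnegativity.

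The hard part — and the reason the conjecture is still open — is that a flag vertex-induced homology subdivision carries none of the ambient geometry available in the two previously solved settings: there is no polytopal realization as in the regular case, and no canonical poset of cells as in the barycentric case treated in this paper. The local $\gamma$-vector encodes precisely the gluing data describing how the restrictions $\Gamma_W$ fit together inside $\Gamma$, so a purely inductive argument based on restrictions loses exactly the information needed. The decisive step will therefore be to identify a dimension-uniform, intrinsic combinatorial invariant of $\Gamma$ that refines each $\gamma_i$ in a manifestly nonnegative way; once such an invariant is in hand, I expect flagness to force the remaining positivity almost automatically, mirroring the situation in every resolved Gal-type conjecture so far.
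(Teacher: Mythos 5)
This statement is Athanasiadis's Conjecture~\ref{conj:gamma}, which the paper explicitly states remains open; there is no proof of it in the paper to compare your attempt against. You correctly recognize this, and what you have written is a research plan rather than a proof, so there is no claim to audit for correctness. What the paper actually proves is the special case Theorem~\ref{thm:nonnegative}: the local $\gamma$-vector of $\sd(\Gamma)$ is nonnegative whenever $\Gamma$ is a CW-regular subdivision of a simplex (and such $\sd(\Gamma)$ are flag and vertex-induced, so this does sit inside the conjectured class). The mechanism there is quite different from, and much more concrete than, the general inductive framework you sketch. The authors first derive, for an arbitrary topological subdivision $\Delta$ of $2^V$, the identity
\begin{equation*}
\ell_V(\Delta,x)=\sum_{F\subseteq V}\bigl[h(\Delta_F,x)-h(\partial(\Delta_F),x)\bigr]\cdot\df_{|V\setminus F|}(x),
\end{equation*}
proved by an induction that uses Lemma~\ref{lem:local_h} together with a new derangement-polynomial recurrence (Corollary~\ref{cor:dn}). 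Specializing to $\Delta=\sd(\Gamma)$, each bracketed difference is $\gamma$-nonnegative by the Ehrenborg--Karu theorem on $\mathbf{cd}$-indices of near-Gorenstein* posets (Theorem~\ref{cdindex}), each $\df_k(x)$ is $\gamma$-nonnegative by Zhang's result, and products of $\gamma$-nonnegative symmetric polynomials are $\gamma$-nonnegative, which closes the argument.

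Your plan does gesture toward $\mathbf{cd}$-index positivity as one candidate tool, and your diagnosis of the obstacle is accurate: the Ehrenborg--Karu input is exactly the piece of structure supplied by the underlying CW-cell poset and has no obvious analogue for a general flag vertex-induced homology subdivision. But a pure induction via M\"obius inversion on restrictions, of the kind you propose in the first paragraph, is precisely what the authors do \emph{not} do -- they instead isolate the non-inductive ingredient $h(\Delta_F,x)-h(\partial\Delta_F,x)$ as the quantity that must be controlled. If you want to push beyond the barycentric case along these lines, the productive question is Problem~4.8 in the paper: for which further classes of subdivisions are these boundary differences $\gamma$-nonnegative? That is a sharper and more tractable target than an all-at-once attack on Conjecture~\ref{conj:gamma}.
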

It can be shown that this conjecture is indeed a strengthening of Gal's conjecture for flag homology spheres \cite[Conjecture~2.1.7]{gal2005real} and, in particular, implies the Charney-Davis conjecture \cite{CharneyDavis}. 
Conjecture \ref{conj:gamma} is known to be true in small dimensions \cite{athanasiadis2012flag} and for various special classes of subdivisions, including barycentric, edgewise and cluster subdivisions of the simplex \cite{athanasiadis2012flag,athanasiadis2012cluster,athanasiadis2016edge} but besides it is still widely open. 
We add more evidence to it by showing the following:
\begin{thm}\label{thm:nonnegative}
Let $\Gamma$ be a CW-regular subdivision of a simplex. The local $\gamma$-vector of the barycentric subdivision $\sd(\Gamma)$ of $\Gamma$ is nonnegative. 
\end{thm}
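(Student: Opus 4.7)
The plan is to reduce Theorem~\ref{thm:nonnegative} to the case of the trivial subdivision of the simplex, where Athanasiadis showed in \cite{athanasiadis2012flag} that the local $h$-polynomial of $\sd(2^V)$ equals the derangement polynomial $d_d(x)$ and is $\gamma$-nonnegative. The reduction combines a decomposition formula for the local $h$-polynomial $\ell_V(\sd(\Gamma),x)$ indexed by faces of the simplex $2^V$ with a new recurrence for the derangement polynomials $d_n(x)$, which is precisely the auxiliary result promised in the abstract.

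First, I would establish a decomposition of the form
\[
\ell_V(\sd(\Gamma),x) \;=\; \sum_{F \subseteq V} \ell_F(\Gamma_F,x)\cdot P_{d,|F|}(x),
\]
where $\Gamma_F$ denotes the restriction of $\Gamma$ to $F$ and $P_{d,k}(x)$ is a universal derangement-type polynomial depending only on $d=|V|$ and $k$. Such an identity should fall out of Stanley's locality formula \cite{sta_sub} applied to the composition $\sd(\Gamma) \to \Gamma \to 2^V$, together with the identity $\ell_V(\sd(2^V),x)=d_d(x)$ in the special case $\Gamma=2^V$, which pins down $P_{d,k}$ uniquely. I would then derive the new derangement recurrence, whose role is to rewrite each $P_{d,k}(x)$ as a nonnegative combination of expressions of the form $c(x)(1+x)^{d-k-2i}$ having the property that multiplication against any symmetric nonnegative polynomial of degree $k$ centered at $k/2$ expands nonnegatively in the $\gamma$-basis $\{x^j(1+x)^{d-2j}\}_j$. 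The hypothesis on the other factor is exactly what is available for $\ell_F(\Gamma_F,x)$: since $\Gamma$ is CW-regular, each restriction $\Gamma_F$ is quasi-geometric, so $\ell_F(\Gamma_F,x)$ is symmetric with nonnegative coefficients by Stanley's theorem. Combining the decomposition with the recurrence termwise then yields a manifestly nonnegative $\gamma$-expansion of $\ell_V(\sd(\Gamma),x)$.

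The main obstacle is the second step: finding a derangement recurrence strong enough that the resulting $\gamma$-nonnegativity uses only symmetry and coefficient-nonnegativity of $\ell_F(\Gamma_F,x)$, never its own $\gamma$-nonnegativity. A naive product argument would demand $\gamma$-nonnegativity of each $\ell_F(\Gamma_F,x)$, and already when $\Gamma_F=2^F$ this reduces to Gal-conjecture-type statements which are open in general. The new recurrence for $d_n(x)$ must therefore be crafted specifically so that the symmetry of $\ell_F(\Gamma_F,x)$ is enough to pass through the expansion, absorbing the ``non-$\gamma$'' part of $\ell_F(\Gamma_F,x)$ against the $\gamma$-structure of the derangement factor. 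Producing such a recurrence, and verifying that its coefficients are nonnegative, is the technical heart of the argument.
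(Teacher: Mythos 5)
Your overall strategic shape (find a face-indexed decomposition of $\ell_V(\sd(\Gamma),x)$, combine it with a new derangement recurrence) matches the paper, but the decomposition you propose is different from the one the paper actually proves, and more importantly the paper's essential $\gamma$-nonnegativity input is missing from your plan.

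The paper's decomposition (Theorem~\ref{thm:formulaoflocalh}) is
\[
\ell_V(\Delta,x)=\sum_{F\subseteq V}\bigl[h(\Delta_F,x)-h(\partial(\Delta_F),x)\bigr]\cdot \df_{|V\setminus F|}(x),
\]
applied to $\Delta=\sd(\Gamma)$. The $F$-dependent factor is $h(\sd(\Gamma_F),x)-h(\partial(\sd(\Gamma_F)),x)$, \emph{not} $\ell_F(\Gamma_F,x)$, and the universal factor is exactly $\df_{d-|F|}(x)$, which the paper controls via Corollary~\ref{cor:dn}. Crucially, the nonnegativity comes from an external theorem of Ehrenborg and Karu \cite{ehrenborg_karu} (Theorem~\ref{cdindex}): since each $\Gamma_F$ is a regular CW ball, $h(\sd(\Gamma_F),x)-h(\partial(\sd(\Gamma_F)),x)$ is $\gamma$-nonnegative, and the product of two $\gamma$-nonnegative polynomials is $\gamma$-nonnegative, so the proof works \emph{term by term}. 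Your proposal never invokes anything like this $\mathbf{cd}$-index theorem, and it is precisely the ingredient that cannot be replaced by ``symmetric plus coefficient-nonnegative.''

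This is the genuine gap in the second step of your plan. You explicitly intend to avoid assuming $\gamma$-nonnegativity of $\ell_F(\Gamma_F,x)$ and to get by with only its symmetry and coefficient-nonnegativity. That cannot succeed in the form you describe. The factor $P_{d,|F|}(x)$ attached to $F$ is forced by symmetry to be symmetric about $(d-|F|)/2$; in particular $P_{d,d}(x)$ must be a constant. Now take $\Gamma$ to be the quasi-geometric (hence CW-regular) subdivision of the $3$-simplex in Figure~\ref{fig:quasigeom}, with $\ell_V(\Gamma,x)=x+x^3$. Its $\gamma$-vector is $(0,1,-2)$. The $F=V$ term of your decomposition would then contribute $c\,(x+x^3)$ for a positive constant $c$, whose $\gamma$-coefficients are $(0,c,-2c)$. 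There is no universal mechanism that forces the remaining terms to compensate for this $-2c$: your argument as stated is term-by-term (``combining the decomposition with the recurrence termwise''), and a single term already fails. More generally, a symmetric nonnegative polynomial such as $1+x^2$ remains non-$\gamma$-nonnegative after multiplication by $(1+x)^e$ for every $e\ge 0$, so no ``derangement-type'' universal factor can launder non-$\gamma$-nonnegativity out of an arbitrary symmetric nonnegative input. The paper avoids this trap by replacing $\ell_F(\Gamma_F,x)$ with the Ehrenborg--Karu quantity, which is $\gamma$-nonnegative for an entirely different (topological/$\mathbf{cd}$-index) reason. There are also smaller issues: your proposed decomposition is stated but not established (it may well be true, but it does not follow directly from the locality formula \eqref{eq:local_h_formula}, which is an $h$-polynomial identity, not a local-$h$ one); and ``$\Gamma_F$ is quasi-geometric'' is not quite meaningful here since $\Gamma_F$ is a regular CW-complex rather than a simplicial complex.
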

The proof is based on an expression of the local $h$-vector which involves differences of $h$-vectors of restrictions of the subdivision and their boundary as well as derangement polynomials (Theorem \ref{thm:formulaoflocalh}). The nonnegativity is then concluded from a result on these differences by Ehrenborg and Karu \cite{ehrenborg_karu}.
As a byproduct of our proof we obtain a new recurrence formula for the derangement polynomials (Corollary 4.2).

We point out that the local $h$-vector, and therefore the local $\gamma$-vector, not only depends on the combinatorial type of $\sd(\Gamma)$ but also on the subdivision map. In Theorem \ref{thm:nonnegative}, we are considering the natural subdivision map of the barycentric subdivision, which will be explained in Section \ref{sect:CW}.

Theorems \ref{thm:charaQuasi} and \ref{thm:nonnegative} are motivated by questions asked by Athanasiadis \cite{athanasiadis2012flag,at_survey}. Indeed, Theorem \ref{thm:charaQuasi} partially solves \cite[Problem 2.11]{at_survey} and Theorem \ref{thm:nonnegative} answers \cite[Question 6.2]{athanasiadis2012flag}.

The paper is structured as follows. In Section~\ref{sec:preliminaries}, we provide the necessary background, including basic facts on simplicial complexes, topological subdivisions and more specifically barycentric subdivisions. Section~\ref{sec:char} is devoted to the characterization of local $h$-polynomials in the quasi-geometric case (Theorem \ref{thm:charaQuasi}). Finally, in Section~\ref{sec:bary} we prove Theorem \ref{thm:nonnegative}.

\section{Preliminaries}\label{sec:preliminaries}

First we provide some background material on simplicial complexes, their subdivisions and (local) $h$-vectors. 

\subsection{Simplicial complexes and their face numbers} 
Given a finite set $V$, a \emph{simplicial complex} $\Delta$ on $V$ is a family of subsets of $V$ which is closed under inclusion, i.e., $G\in \Delta$ and $F\subseteq G$ implies $F\in\Delta$. The elements of $\Delta$ are called \emph{faces} and the inclusion-maximal faces are called \emph{facets} of $\Delta$. The \emph{dimension} of a face $F\in\Delta$ is given by $|F|-1$ and the \emph{dimension} of $\Delta$ is the maximal dimension of its facets. If all facets of $\Delta$ have the same dimension, $\Delta$ is called \emph{pure}. We define the \emph{link} of a face $F\in\Delta$ to be
\begin{equation*}
\link_{\Delta}(F)=\left\{ G\in\Delta \mid G\cap F=\emptyset, G\cup F\in\Delta  \right\}.
\end{equation*}
The \emph{$f$-vector} $f(\Delta)=(f_{-1}(\Delta),f_0(\Delta),\ldots,f_{d-1}(\Delta))$ of a $(d-1)$-dimensional simplicial complex $\Delta$ encodes the number of $i$-dimensional faces ($-1\leq i\leq d-1$), i.e., 
\begin{equation*}
f_i(\Delta)=\left| \{F\in\Delta \mid \dim(F)=i \} \right| \quad \mbox{for } -1\leq i\leq d-1. 
\end{equation*}
Often it is more convenient to work with the \emph{$h$-vector} $h(\Delta)=(h_0(\Delta),\ldots,h_d(\Delta))$ of $\Delta$, which is defined by
\begin{equation*}
h_i(\Delta)=\sum_{j=0}^{i}(-1)^{i-j}\binom{d-j}{i-j}f_{j-1}(\Delta) \quad \mbox{for }0\leq i\leq d.
\end{equation*}
Regarding this vector as a sequence of coefficients yields the \emph{$h$-polynomial}
\begin{equation*}
h(\Delta,x)=\sum_{i=0}^{d}h_i(\Delta)x^i.
\end{equation*}
We refer the reader to \cite{sta_cca} for further background material.

\subsection{Subdivisions and local $h$-vectors}
The notion of local $h$-vectors goes back to Stanley \cite{sta_sub} and we recommend this article as a detailed reference (also see \cite{at_survey}).
Throughout this section, $V$ will always denote a nonempty finite set of cardinality $d$. A \emph{topological subdivision} of a simplicial complex $\Delta$ is a pair $(\Gamma, \sigma)$, where $\Gamma$ is a  simplicial complex and $\sigma$ is a map $\sigma:\Gamma\rightarrow\Delta$ such that, for any face $F \in \Delta$,
\begin{enumerate}[(i)]
\item[(1)] $\Gamma_F:=\sigma^{-1}(2^F)$ is a subcomplex of $\Gamma$ which is homeomorphic to a ball of dimension $\dim(F)$. $\Gamma_F$ is called the \emph{restriction} of $\Gamma$ to $F$.
\item[(2)] $\sigma^{-1}(F)$ consists of the interior faces of $\Gamma_{F}$.
\end{enumerate}
Following Stanley \cite{sta_sub}, we call the face $\sigma(G)\in\Delta$ the \emph{carrier} of $G\in\Gamma$. 
We also want to warn the reader not to confuse the notation $\Gamma_F$ with the induced subcomplex of $\Gamma$ on vertex set $F$, which might even consist just of the vertices in $F$. Also, note that it directly follows from condition (1) that $\sigma$ is inclusion-preserving, i.e., $\sigma(G)\subseteq \sigma(F)$ if $G\subseteq F$.  
In what follows, we often just write subdivision instead of topological subdivision if we are referring to a subdivision without additional properties, and we say that $\Gamma$ is a subdivision of $\Delta$ without referring to the map $\sigma$ if this one is clear from the context.

Let $\Gamma$ be a subdivision $(\Gamma,\sigma)$ of a simplicial complex $\Delta$. We say that $(\Gamma,\sigma)$ is 
\emph{quasi-geometric} if there do not exist $E\in\Gamma$ and $F\in\Delta$ with $\dim(F)<\dim(E)$ such that $\sigma(v)\subseteq F$ for any vertex $v$ of $E$. The subdivision $(\Gamma,\sigma)$ is \emph{vertex-induced} if for all faces $E\in\Gamma$ and $F\in\Delta$ such that every vertex of $E$ is a vertex of $\Gamma_F$, we have $E\in\Gamma_F$. Moreover, $(\Gamma,\sigma)$ is called \emph{geometric} if the subdivision $\Gamma$ admits a geometric realization that geometrically subdivides a geometric realization of $\Delta$. Finally, we say that $(\Gamma,\sigma)$ is \emph{regular} if the subdivision is induced by a weight function, i.e., it can be obtained via a projection of the lower hull of a polytope (see \cite[Definition~5.1]{sta_sub}). We have the following relations between those properties:
\begin{center}
$\{\mbox{topological subdivisions}\}\supsetneq \{\mbox{quasi-geometric subdivisions}\}\supsetneq$\\

 $\{\mbox{vertex-induced subdivisions}\}\supsetneq \{\mbox{geometric subdivisions}\}\supsetneq \{\mbox{regular subdivisions}\}$,
\end{center}
where all containments are strict. Figure~\ref{fig:subdiv_typ} shows examples of subdivisions of the $2$-simplex that are (a) regular, (b) geometric but not regular, (c) quasi-geometric but not vertex-induced and (d) not even quasi-geometric. A subdivision that is vertex-induced but not geometric is harder to depict but can be found in \cite{chan1994subdivisions}. 
\begin{figure}[h]
\includegraphics[scale=0.8]{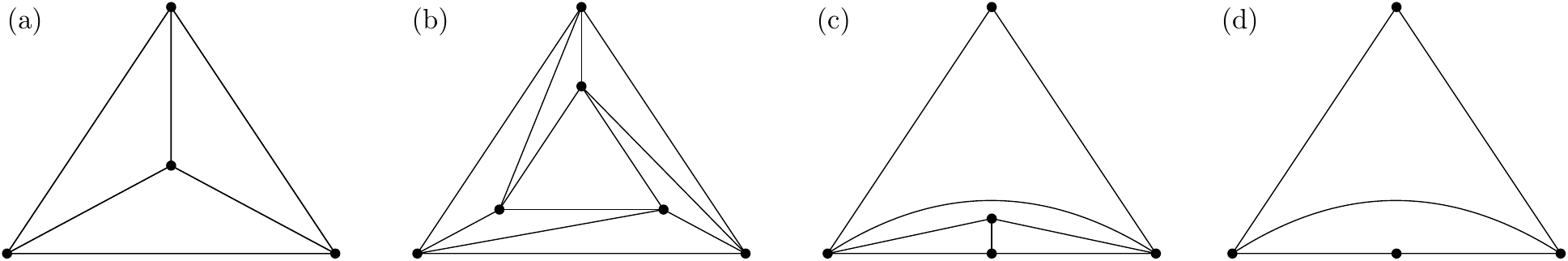}
\caption{Examples of subdivisions of the $2$-simplex.}
\label{fig:subdiv_typ}
\end{figure}

In 1992, Stanley introduced the local $h$-vector of a subdivision of a simplex as a tool to study the classical $h$-vector of the subdivision of a simplicial complex.

\begin{dfn}
Let $\Gamma$ be a subdivision of $2^V$. Then 
\begin{equation}\label{eq:local_h}
\ell_V(\Gamma,x)=\sum_{F\subseteq V}(-1)^{d-|F|}h(\Gamma_F,x)=\sum_{i=0}^{d}\ell_i(\Gamma)x^i
\end{equation} is called the \emph{local $h$-polynomial} of $\Gamma$ (with respect to $V$) and the vector $\ell_V(\Gamma)=(\ell_0(\Gamma),\ell_1(\Gamma),\ldots,\ell_d(\Gamma))$ is referred to as the \emph{local $h$-vector} of $\Gamma$ (with respect to $V$).
\end{dfn}
To make this article self-contained, we now summarize some of the most important properties of local $h$-vectors that will be used later on (see also \cite[Theorem~2.6]{at_survey}). For this, we recall that a sequence $(a_0,a_1,\ldots,a_m)\in \NN^{m+1}$ is called \emph{unimodal} if there exists $0\leq s\leq m$ such that $a_0\leq a_1\leq \cdots \leq a_s\geq a_{s+1}\geq \cdots \geq a_m$. 

\newpage

\begin{thm}[Stanley \cite{sta_sub}]\label{thm:sub_main}
\leavevmode
\begin{enumerate}
\item[(1)] Let $\Delta$ be a pure simplicial complex and let $\Gamma$ be a subdivision of $\Delta$. Then:
\begin{equation}\label{eq:local_h_formula}
h(\Gamma,x)=\sum_{F\in\Delta}\ell_F(\Gamma_F,x)h(\link_{\Delta}(F),x).
\end{equation}
\item[(2)]Let $V\neq \emptyset$ and let $\Gamma$ be a subdivision of $2^V$. Then:
\begin{enumerate}
\item[(a)] The local $h$-vector is symmetric, i.e., $\ell_i(\Gamma)=\ell_{d-i}(\Gamma)$ for $0\leq i\leq d$. Furthermore, $\ell_0(\Gamma)=0$ and $\ell_1(\Gamma)\geq0$.
\item[(b)] If $\Gamma$ is a quasi-geometric, then $\ell_i(\Gamma)\geq0$ for $0\leq i\leq d$.
\item[(c)] If $\Gamma$ is a regular, then $\ell_V(\Gamma)$ is unimodal.
\end{enumerate}
\end{enumerate}
\end{thm}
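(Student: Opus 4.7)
The plan is to attack the four parts separately, treating (1) as the structural identity from which the statements in (2) flow by specialization and extra hypotheses. The main technical tool throughout is the face-level expansion $h(\Delta',x)=\sum_{E\in\Delta'}x^{|E|}(1-x)^{\dim\Delta'+1-|E|}$, combined with carrier-grouping and Möbius inversion on the Boolean lattice.

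For (1), I would start from $h(\Gamma,x)=\sum_{E\in\Gamma} x^{|E|}(1-x)^{d-|E|}$ and partition the sum by carrier: every $E\in\Gamma$ has a unique $F=\sigma(E)\in\Delta$, and $E$ is then an interior face of $\Gamma_F$. The cleanest strategy, however, is to verify the right-hand side of \eqref{eq:local_h_formula} directly: substitute the defining Möbius expression $\ell_F(\Gamma_F,x)=\sum_{G\subseteq F}(-1)^{|F|-|G|}h(\Gamma_G,x)$, swap the order of summation over $G\subseteq F$ in $\Delta$, and evaluate the resulting inner sum $\sum_{F\supseteq G,\,F\in\Delta}(-1)^{|F|-|G|}h(\link_\Delta F,x)$ by inserting the face expansion of $h(\link_\Delta F,x)$ and using the basic identity $\sum_{G\subseteq F\subseteq F'}(-1)^{|F|-|G|}x^{|F'|-|F|}=(x-1)^{|F'|-|G|}$. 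After simplification the right-hand side of \eqref{eq:local_h_formula} collapses to $\sum_{G\in\Delta}h(\Gamma_G,x)\cdot(\text{telescoping factor})$, reducing to $h(\Gamma,x)$.

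For (2)(a), symmetry would be derived from the fact that each $\Gamma_F$ is a ball of dimension $|F|-1$: a Dehn–Sommerville-type reciprocity for balls, combined with the alternating-sign structure in the definition of $\ell_V(\Gamma,x)$, yields $\ell_V(\Gamma,x)=x^d\ell_V(\Gamma,1/x)$. The vanishing $\ell_0(\Gamma)=0$ reduces to the elementary identity $\sum_{F\subseteq V}(-1)^{d-|F|}=0$ for $V\neq\emptyset$, since $h_0(\Gamma_F)=1$ for every $F$. For $\ell_1(\Gamma)\geq 0$, I would extract the coefficient of $x$ from \eqref{eq:local_h} and recognize it, after cancellations, as a count of interior vertices of $\Gamma_V$, which is manifestly nonnegative.

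For (2)(b), I would apply the same carrier-grouping plus Möbius swap used in (1) to $\Gamma$ as a subdivision of $2^V$, obtaining a face-level formula of the form $\ell_V(\Gamma,x)=\sum_{E\in\Gamma}(-1)^{d-|\sigma(E)|}x^{|E|+d-|\sigma(E)|}(1-x)^{|\sigma(E)|-|E|}$. The main obstacle is taming the signs: the quasi-geometric condition precisely rules out configurations in which all vertices of some $E\in\Gamma$ lie on a face $F'\in 2^V$ with $\dim F'<\dim E$, and I expect this to force the problematic $(-1)^{d-|\sigma(E)|}$ contributions to cancel or combine into a polynomial with manifestly nonnegative coefficients. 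Finally, for (2)(c), regularity provides a defining weight function and a polytopal model for $\Gamma$; my approach would be to realize $\ell_V(\Gamma,x)$ as the $h$-polynomial of a relative simplicial complex built from the lower envelope and invoke a Hard-Lefschetz-type result on its Stanley–Reisner ring. I expect the main difficulty of the whole theorem to lie precisely here: extracting the Lefschetz element intrinsically from the weight data to deduce unimodality.
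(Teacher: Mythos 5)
The paper does not actually prove this theorem; it is cited verbatim from Stanley~\cite{sta_sub} as background. So there is no in-paper proof to compare yours against, and I will evaluate your sketch against Stanley's original arguments.

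Your outlines for (1) and (2)(a) are essentially on the right track and match Stanley's route: (1) is indeed Möbius inversion over the Boolean lattice of faces of $\Delta$ after substituting the defining alternating sum for $\ell_F(\Gamma_F,x)$; and for (2)(a) the symmetry $\ell_V(\Gamma,x)=x^d\ell_V(\Gamma,1/x)$ comes from the reciprocity $x^{|F|}h(\Gamma_F,1/x)=\sum_{G\subseteq F}h(\Gamma_G,x)$, which holds because each $\Gamma_F$ is a simplicial ball whose boundary is $\bigcup_{G\subsetneq F}\Gamma_G$. The $\ell_0=0$ computation and the interpretation of $\ell_1$ as a count of interior vertices are also correct.

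The serious gap is in (2)(b). You propose a face-level expansion $\ell_V(\Gamma,x)=\sum_{E\in\Gamma}(\pm)\,x^{\cdots}(1-x)^{\cdots}$ and expect the quasi-geometric condition to make the negative contributions ``cancel or combine'' into manifestly nonnegative terms. There is no reason to expect this, and Stanley's actual proof is of a completely different nature: he constructs a \emph{local face module} $L_V(\Gamma)$ (a graded submodule of the Stanley--Reisner ring of $\Gamma$) whose Hilbert function is the local $h$-vector, and shows that a generic linear system of parameters $\theta_1,\dots,\theta_d$ acts on $L_V(\Gamma)$ so that $L_V(\Gamma)/(\theta_1,\dots,\theta_d)L_V(\Gamma)$ is still nonnegatively graded; the quasi-geometric hypothesis is used precisely to guarantee the required Cohen--Macaulay-type behavior (via an injectivity/linear-independence argument on carrier supports), not to control signs in a combinatorial sum. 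Indeed, nonnegativity of local $h$-vectors genuinely fails for general topological subdivisions even though your proposed face-level expansion makes sense for them, which already shows that sign bookkeeping alone cannot work. Similarly, for (2)(c), unimodality for regular subdivisions is not obtained from a Hard Lefschetz statement on the Stanley--Reisner ring of $\Gamma$ itself but from the decomposition theorem / hard Lefschetz for intersection cohomology of the associated toric morphism; your framing is in the right spirit but needs the toric/IC machinery, not an intrinsic Lefschetz element on the face ring. In short: (1), (2)(a) are roughly right; (2)(b) as proposed is a wrong approach and would not close; (2)(c) needs to be reformulated in the toric intersection-cohomology setting.
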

It was shown by Chan \cite{chan1994subdivisions} that the conditions in (2(a)) already characterize local $h$-vectors of topological subdivisions. Adding unimodality, one obtains the characterization of local $h$-vectors of regular subdivisions. We will complete this picture by showing in the next section that indeed every vector satisfying the conditions in (2(a)) and (2(b)) occurs as the local $h$-vector of a quasi-geometric subdivision.

In the following, let $\Gamma$ be a subdivision of $2^V$. 
As by Theorem \ref{thm:sub_main} (2(a)) $\ell_V(\Gamma)$ is symmetric, we can express the local $h$-polynomial $\ell_V(\Gamma,x)$ uniquely in the polynomial basis $\{x^k(1+x)^{d-2k}\mid 0\leq k\leq \lfloor d/2\rfloor\}$, i.e., 
\begin{equation*}
\ell_V(\Gamma,x)=\sum_{k=0}^{\lfloor d/2\rfloor}\xi_k(\Gamma) x^k(1+x)^{d-2k},
\end{equation*}
where $\xi_k(\Gamma)\in\ZZ$ are uniquely determined. The sequence $\xi_V(\Gamma)=(\xi_0(\Gamma),\ldots,\xi_{\lfloor d/2\rfloor}(\Gamma))$ is called the \emph{local $\gamma$-vector} of $\Gamma$ (with respect to $V$). 
The local $\gamma$-vector is known to be nonnegative for flag vertex-induced subdivisions in dimension $\leq 3$ \cite{athanasiadis2012flag} and for special classes of subdivisions including barycentric, edgewise and cluster subdivisions of the simplex\cite{athanasiadis2012flag,athanasiadis2012cluster}. 

More generally, any symmetric polynomial $p(x)$ with center of symmetry $n/2$ is called \emph{$\gamma$-nonnegative} if the coefficients $\gamma_k$ given by
\begin{equation*}
p(x)=\sum_{k=0}^{\floor{n/2}}\gamma_k x^k(1+x)^{n-2k}.
\end{equation*}
are nonnegative.

\subsection{CW-regular subdivisions}\label{sect:CW}
The definition of topological subdivisions can be naturally extended to regular CW-complexes \cite[\S 7]{sta_sub}.
For a regular CW-complex $\Gamma$,
we write $P(\Gamma)$ for the face poset of $\Gamma$.
A \emph{CW-regular} subdivision of a simplicial complex $\Delta$ is a pair $(\Gamma,\sigma)$, where $\Gamma$ is a regular CW-complex and $\sigma : P(\Gamma) \to \Delta$ is a map satisfying the conditions (1) and (2) of topological subdivisions.

Given a regular CW-complex $\Gamma$, its barycentric subdivision $\sd(\Gamma)$ is the simplicial complex, whose $i$-dimensional faces are given by chains
\[
\tau_0 \lneq \tau_1 \lneq \cdots \lneq \tau_i,
\]
where $\tau_j \in P(\Gamma)$ is a non-empty face of $\Gamma$ ($0\leq j\leq i$).
It is well-known that $\Gamma$ and $\sd(\Gamma)$ are homeomorphic.
Then, for a CW-regular subdivision $(\Gamma,\sigma)$ of a simplicial complex $\Delta$,
$\sd(\Gamma)$ can be naturally considered as a subdivision of $\Delta$ by the map 
\begin{equation*}
\sigma'(\{\tau_0,\tau_1,\dots,\tau_i\})
:=\sigma(\max\{\tau_0,\tau_1,\dots,\tau_i\}),
\end{equation*}
since
\begin{equation*}
(\sigma')^{-1}(2^F)=\big\{ \{\tau_0,\tau_1,\dots,\tau_i\} \in \sd(\Gamma): \sigma(\max\{\tau_0,\tau_1,\dots,\tau_i\}) \subseteq F\big\}=\sd (\Gamma_F),
\end{equation*}
where $\Gamma_F=\{ \tau \in P(\Gamma): \sigma(\tau) \subseteq F\}$.
When we consider the local $h$-polynomials of $\sd(\Gamma)$, we always consider the local $h$-polynomial using the above map $\sigma'$.

\subsection{Derangement polynomial}

Let $V$ be a set of cardinality $d$. It is easy to see that the barycentric subdivision $\sd(2^V)$ of a $(d-1)$-simplex $2^V$ is a special instance of a regular subdivision. Its $h$-vector $h(\sd(2^V))=(h_0(\sd(2^V)),\ldots,h_d(\sd(2^V)))$ is given by 
\begin{equation*}
h_i(\sd(2^V))=A(d,i),
\end{equation*}
where $A(d,i)$ are the Eulerian numbers, counting the number of permutations in the symmetric group $S_d$ on $d$ elements with exactly $i$ descents. Recall that a \emph{descent} of a permutation $\pi\in S_d$ is an index $1\leq k\leq d-1$ such that $\pi(k)>\pi(k+1)$. 
Similarly, there is an expression of the local $h$-vector $\ell_V(\sd(2^V))$ of $\sd(2^V)$ involving permutation statistics. An \emph{excedance} of a permutation $\pi$ is an index $1\leq i\leq d$ such that $\pi(i)>i$. We write $\exc(\pi)$ for the number of excedances of $\pi$, i.e.,  
\begin{equation*}
\exc(\pi)=\left|\{ i\in[d] \mid \pi(i)>i \}  \right|.
\end{equation*} 
A permutation $\pi\in S_d$ is called a \emph{derangement} if it does not have any fixed point. We denote by $\Df_d$ the set of derangements in $S_d$. The \emph{derangement polynomial} of order $d$ is defined by
\begin{equation*}
\df_d(x)=\sum_{\pi\in \Df_d}x^{\exc(\pi)}.
\end{equation*}
It is convenient to set $\df_0(x)=1$. 
These polynomials were first studied by Brenti in \cite{bre90}. 
It is not hard to see (see also \cite[Proposition~2.4.]{sta_sub}) that the local $h$-polynomial of $\sd(2^V)$ is given by 
\begin{equation}\label{eq:local_h_bary}
\ell_V(\sd(2^V),x)=\df_d(x).
\end{equation}
It was shown in \cite{zhang1995q} that derangement polynomials, and thus the local $h$-polynomial of $\sd(2^V)$ is $\gamma$-nonnegative. 

\section{Characterization of local $h$-polynomials}\label{sec:char}
In this section, we provide a characterization of local $h$-vectors of quasi-geometric subdivisions. This complements work by Chan \cite{chan1994subdivisions}, who showed that local $h$-vectors of topological and regular subdivisions are completely characterized by their properties in  Theorem \ref{thm:sub_main} (2).  In particular, we prove Theorem \ref{thm:charaQuasi} by extending her main idea. 

As local $h$-vectors of quasi-geometric subdivisions are known to be symmetric and nonnegative with first entry equal to $0$ (see Theorem \ref{thm:sub_main} (2)), we only need to show that the conditions in (2b) are also sufficient. Let $\ell=(\ell_0,\ldots,\ell_d)\in\ZZ^{d+1}$ be fixed and assume that $\ell$ satisfies the conditions in Theorem \ref{thm:charaQuasi} (2). We will explicitly construct a quasi-geometric subdivision $\Gamma$ of a $(d-1)$-simplex $2^V$ with $\ell_V(\Gamma)=\ell$. The basic idea is to find operations on a subdivision $\Delta$ of a simplex that preserve quasi-geometricity, change the local $h$-vector of $\Delta$ in a prescribed way and such that $\ell$ can be realized as local $h$-vector of a subdivision obtained by successively applying these operations. 

In \cite{chan1994subdivisions} Chan already provided three operations that suffice to construct all local $h$-vectors of arbitrary topological subdivisions. Though one of these does not necessarily preserve quasi-geometricity,  we recall her constructions and their effects on the local $h$-vector, since we will use them in what follows.\\
Let $V$ be a set with $|V|=d$ and let $(\Gamma,\sigma)$ be a subdivision of the $(d-1)$-simplex $2^V$.

\begin{enumerate}
\item[(O1)] Let $(\st_{\Gamma}(F),\sigma')$ be obtained from $\Gamma$ by stellar subdivision of a facet $F$ of $\Gamma$, where $\sigma'(z)=\sigma(F)$ for the new vertex $z$. Then:
\begin{equation*}
 \ell_V(\st_{\Gamma}(F))=\ell_V(\Gamma)+(0,1,\ldots,1,0).
\end{equation*}
\item[(O2)] Let $d\geq 4$ and $G$ be a $(d-2)$-dimensional face of $\Gamma$ with $(d-2)$-dimensional carrier $\sigma(G)$. Let $(\mathrm{P}_\Gamma(G),\sigma')$ be the subdivision of $2^V$ obtained from $\Gamma$ by adding a new vertex $w$ with carrier $\sigma'(w)=\sigma(G)$ and one new facet $G\cup\{w\}$ (with carrier $V$). (Note that $\sigma'(G)=V$.) Then:
\begin{equation*}
\ell_V(\Pu_{\Gamma}(G))=\ell_V(\Gamma)+(0,0,-1,\ldots,-1,0,0).
\end{equation*}
We will say that $\Pu_\Gamma(G)$ is obtained from $\Gamma$ by \emph{pushing} $G$ into the interior. 
\item[(O3)] Let $\Omega$ be the subdivision of a $1$-simplex $2^{\{d+1,d+2\}}$ into two edges. Then $\Gamma^{*1}:=\Gamma\ast\Omega$ is a subdivision of the $(d+1)$-simplex $2^{V\cup\{d+1,d+2\}}$ with 
\begin{equation*}
\ell_{V'}(\Gamma^{*1})=(0,\ell_V(\Gamma),0).
\end{equation*}
\end{enumerate}
The operations are depicted in Figure~\ref{fig:operations}.
\begin{figure}[h]
\includegraphics[scale=0.7]{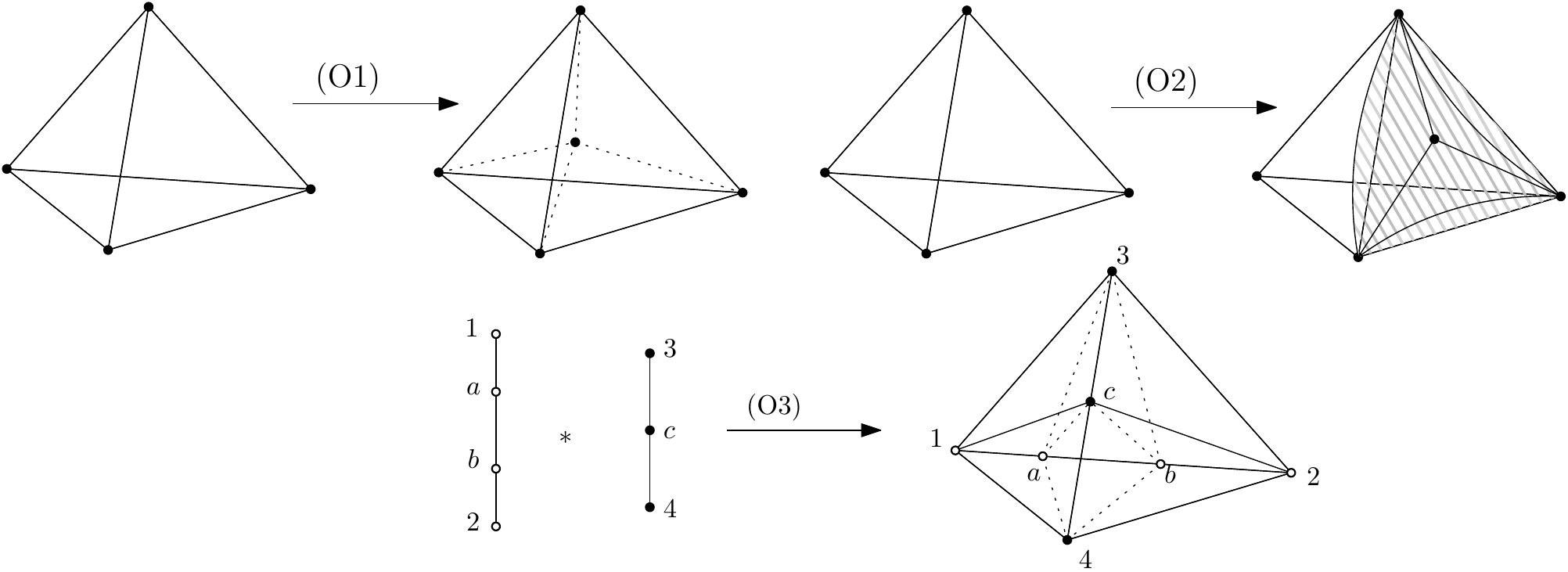}
\caption{Constructions for subdivisions.}
\label{fig:operations}
\end{figure}
Chan showed that~--~starting from a $2$- or $3$-simplex~--~these constructions suffice to generate any symmetric vector $\ell\in\ZZ^{d+1}$ with $\ell_0=0$ and $\ell_1\geq0$. Moreover, both, the stellar subdivision (O1) and the join operation (O3), maintain regularity of a subdivision and any symmetric and unimodal vector $\ell\in\ZZ^{d+1}$ with $\ell_0=0$ can be constructed by their successive application \cite{chan1994subdivisions}.

It is straight forward to show that that stellar subdivision (O1) and the join operation (O3) behave well with respect to quasi-geometricity.

\begin{lem}\label{lem:operations_gq}
Let $\Gamma$ be a quasi-geometric subdivision of $2^V$. 
 Then:
\begin{enumerate}
\item[(1)] If $F$ is a facet of $\Gamma$, then $\st_{\Gamma}(F)$ is a quasi-geometric subdivision of $2^V$.
\item[(2)] $\Gamma^{*1}$ is a quasi-geometric subdivision of $2^{V\cup\{d+1,d+2\}}$.
\end{enumerate}
\end{lem}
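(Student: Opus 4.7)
The plan is to unpack the definition of quasi-geometricity and check it face by face in each construction. Recall that $(\Gamma,\sigma)$ fails to be quasi-geometric exactly when some face $E\in\Gamma$ admits a ``too small'' candidate carrier, i.e., there exists $F'\in\Delta$ with $\dim(F')<\dim(E)$ and $\sigma(v)\subseteq F'$ for every vertex $v$ of $E$. In particular, applied to $E$ itself with $F'=\sigma(E)$, quasi-geometricity forces $\dim(\sigma(E))\geq\dim(E)$ for every $E\in\Gamma$; this elementary consequence will be used throughout.

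For part (1), I would split the faces of $\st_\Gamma(F)$ into two types: those not containing the new vertex $z$, which are already faces of $\Gamma$ with the same carrier, so quasi-geometricity is inherited directly from $\Gamma$; and those of the form $E=G\cup\{z\}$ with $G\subsetneq F$. For the latter, assume for contradiction that some $F'\subseteq V$ satisfies $\sigma'(v)\subseteq F'$ for every vertex $v$ of $E$ and $\dim(F')<\dim(E)$. Since $\sigma'(z)=\sigma(F)$, this forces $F'\supseteq\sigma(F)$, hence $\dim(F')\geq\dim(\sigma(F))\geq\dim(F)$ by the elementary consequence above applied to the facet $F$ of $\Gamma$. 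On the other hand $\dim(E)=|G|\leq |F|-1<\dim(F)+1$, so $\dim(E)\leq\dim(F)\leq\dim(F')$, contradicting $\dim(F')<\dim(E)$.

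For part (2), first I would observe that $\Omega$ is itself quasi-geometric (this is essentially trivial since $\Omega$ is even geometric: its only face whose carrier could cause trouble is the central edge, whose carrier has the right dimension). Next, every face of $\Gamma^{*1}=\Gamma\ast\Omega$ is of the form $E=A\sqcup B$ with $A\in\Gamma$ and $B\in\Omega$, and its carrier under the induced subdivision map $\sigma'$ is $\sigma(A)\cup\sigma_\Omega(B)$. Suppose some $F'\subseteq V\cup\{d+1,d+2\}$ violates quasi-geometricity for $E$. Writing $F_1=F'\cap V$ and $F_2=F'\cap\{d+1,d+2\}$, the hypothesis $\sigma'(v)\subseteq F'$ for all vertices $v$ of $E$ splits cleanly: $\sigma(v)\subseteq F_1$ for $v\in A$ and $\sigma_\Omega(v)\subseteq F_2$ for $v\in B$, because carriers of vertices of $\Gamma$ lie in $V$ and carriers of vertices of $\Omega$ lie in $\{d+1,d+2\}$. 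Quasi-geometricity of $\Gamma$ and $\Omega$ then yield $|F_1|\geq|A|$ and $|F_2|\geq|B|$ (with the trivial cases when $A$ or $B$ is empty handled separately), hence $|F'|=|F_1|+|F_2|\geq|A|+|B|=|E|$, contradicting $\dim(F')<\dim(E)$.

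Both arguments are short bookkeeping exercises once the definition is unpacked; the only subtle point is making sure that in (2) the split $F'=F_1\sqcup F_2$ really does translate the hypothesis on $E$ into a pair of hypotheses on $A$ and $B$, which works because the carrier map on a join is the disjoint union of the carrier maps on the two factors. No deeper combinatorial input is needed.
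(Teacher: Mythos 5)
Your proof is correct. The paper gives no proof of this lemma — it simply remarks that the statement ``is straightforward to show'' — so there is no authorial argument to compare against; your careful unpacking of the definition is precisely the kind of verification the authors are alluding to. A few minor observations: in part (1), since $F$ is a facet of a subdivision of $2^V$, it is an interior face of $\Gamma$ and hence $\sigma(F)=V$ automatically, so the ``elementary consequence'' $\dim\sigma(F)\geq\dim F$ is slight overkill (though it does no harm and would also cover a would-be variant where $F$ is not a facet). In part (2), the key structural fact you use — that the carrier map on a join $\Gamma\ast\Omega$ splits as $\sigma'(A\sqcup B)=\sigma(A)\cup\sigma_\Omega(B)$ and that carriers of vertices from the two factors land in disjoint ground sets — is exactly what makes the argument go through, and you handle the empty-factor boundary cases correctly. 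Nothing is missing.
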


Even though, $\Pu_\Gamma(G)$ (if defined) might not be quasi-geometric (even if $\Gamma$ is), the next lemma shows that this obstruction can be ``repaired'' with just  one additional stellar subdivision.
\begin{lem}\label{lem:O4}
Let $\Gamma$ be a quasi-geometric subdivision of $2^V$ and let $d=|V|\geq4$. Let  $G$ be a $(d-2)$-dimensional face of $\Gamma$ with $(d-2)$-dimensional carrier. Let $w$ be the new vertex of $\Pu_\Gamma(G)$. Then the subdivision $\st_{\Pu_{\Gamma}(G)}(G\cup\{w\})$ obtained by first pushing $G$ into the interior of $\Gamma$ and then stellarly subdividing the new facet $G\cup\{w\}$ is a quasi-geometric subdivision of $2^V$. Moreover,
\begin{equation*}
\ell_V(\st_{\Pu_{\Gamma}(G)}(G\cup\{w\}))=\ell_V(\Gamma)+(0,1,0,\ldots,0,1,0).
\end{equation*}
\end{lem}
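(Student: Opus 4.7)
The plan is to realize the operation as a two-step composition: first apply the pushing (O2) to $\Gamma$, producing the subdivision $\Pu_\Gamma(G)$, and then apply the stellar subdivision (O1) to its newly created facet $G\cup\{w\}$. With this decomposition the local $h$-vector identity is immediate: by (O2) and (O1),
\[
\ell_V(\st_{\Pu_\Gamma(G)}(G\cup\{w\})) = \ell_V(\Gamma) + (0,0,-1,\ldots,-1,0,0) + (0,1,1,\ldots,1,0),
\]
and the two correction vectors telescope to $(0,1,0,\ldots,0,1,0)$. The real content of the lemma is therefore the quasi-geometricity claim.

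To prove this, I would first analyze what goes wrong in $\Pu_\Gamma(G)$. Denoting its carrier map by $\sigma'$, the intermediate complex fails to be quasi-geometric because the new facet $G\cup\{w\}$ has dimension $d-1$ while each of its vertex-carriers lies inside $\sigma(G)$, which has dimension $d-2$ (the old vertices of $G$ inherit their carriers from $\Gamma$ and are all contained in $\sigma(G)$, while $\sigma'(w)=\sigma(G)$). This is the \emph{only} obstruction: a proper sub-face $H\cup\{w\}$ with $\emptyset\neq H\subsetneq G$ has dimension $|H|\leq d-2=\dim\sigma(G)$, so it satisfies the quasi-geometric condition, and every face of $\Gamma$ itself remains quasi-geometric because its vertex-carriers are unchanged. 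The reassignment $\sigma'(G)=V$ is harmless for the same reason, as the quasi-geometric condition for $G$ depends only on the carriers of its vertices, which are inherited from $\Gamma$.

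The final step is to show that the stellar subdivision precisely repairs this single obstruction. It removes the offending facet $G\cup\{w\}$ and adjoins a new vertex $z$ with carrier $V$, together with all proper sub-faces of $G\cup\{w\}$ coned with $z$. Any face of the new complex that contains $z$ is automatically quasi-geometric, since $V$ then appears among its vertex-carriers and no proper subset of $V$ can contain it; the remaining faces are exactly the non-offending faces of $\Pu_\Gamma(G)$, which we have already verified. The main subtlety of the argument is the careful bookkeeping of vertex-carriers after the reassignment $\sigma'(G)=V$ and the verification that the newly introduced star of $z$ does not inadvertently create further violations.
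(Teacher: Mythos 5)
Your argument is correct and follows essentially the same route as the paper's proof: decompose the construction into (O2) followed by (O1), telescope the two local $h$-vector corrections, and then verify condition (QG) by splitting the faces of the final complex into (i) faces inherited from $\Gamma$ (whose vertex-carriers are unchanged, so $\Gamma$'s quasi-geometricity applies), (ii) faces containing the stellar vertex $z$ (trivially fine since $\sigma(z)=V$), and (iii) faces containing $w$ but not $z$, which have dimension at most $d-2$ while $\sigma(w)=\sigma(G)$ already has dimension $d-2$. The only cosmetic difference is that you diagnose the single obstruction in the intermediate complex $\Pu_\Gamma(G)$ explicitly before repairing it, whereas the paper runs the case analysis directly on $\Gamma'=\st_{\Pu_\Gamma(G)}(G\cup\{w\})$; this changes nothing of substance.
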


Figure~\ref{fig:quasigeom} shows $\st_{\Pu_{\Gamma}(G)}(G\cup\{w\})$ in the case that $\Gamma$ is just a $3$-simplex.

\begin{proof}
To simplify notation, we set $\Gamma':=\st_{\Pu_{\Gamma}(G)}(G\cup\{w\})$. 
The claim about the local $h$-vector follows immediately from the definition of the used operations (see also \cite{chan1994subdivisions}). 

It remains to verify that $\Gamma'$ is a quasi-geometric subdivision of $2^V$. We denote by $\sigma_1:\Gamma\rightarrow 2^V$ the map corresponding to the subdivision $\Gamma$ of $2^V$ and by $\sigma_2:\Gamma'\rightarrow \Gamma$ the subdivision map of $\Gamma'$ (as a subdivision of $\Gamma$). The subdivision map of $\Gamma'$ as a subdivision of $2^V$ is then given by $\sigma:=\sigma_2\circ\sigma_1$. Let $z$ be the newly added vertex when applying stellar subdivision to $G\cup\{w\}$.


We first note that by definition of $\Gamma'$ and $\sigma$ we have
\begin{equation*}
\sigma(E)=\sigma_1(E),\quad\forall\,E\in\Gamma\cap\Gamma'\setminus\{G\},\quad \sigma(w)=\sigma_1(G) \quad \mbox{and}\quad \sigma(z)=V.
\end{equation*}
Given a face $E\in \Gamma'$, we need to show that the following condition, referred to as condition (QG) in the sequel, is satisfied:\\

\begin{itemize}
\item[(QG)] For all $F\subseteq V$ such that $\sigma'(v)\subseteq F$  for all $v\in E$,  it holds that $\dim F\geq \dim E$.\\
\end{itemize}

Let $E\in \Gamma\cap\Gamma'\setminus \{G\}$. In this case, we have $\sigma(E)=\sigma_1(E)$ and $\sigma(u)=\sigma_1(u)$ for all $u\in E$. As $\Gamma$ is quasi-geometric, it follows, that $E$ satisfies condition (QG).

Similarly, we have $\sigma(u)=\sigma_1(u)$ for all $u\in G$ and as $\Gamma$ is quasi-geometric,  condition (QG) holds for $G$. 

It remains to consider faces $E\in \Gamma'\setminus \Gamma$. First assume $z\in E$. As $\sigma(z)=V$ by construction and $|V|=d$, those faces satisfy condition (QG). Suppose that $z\notin E$. As $E\notin \Gamma$, we must have that $w\in E$. Since $z\notin E$, we can further conclude that $\dim E\leq d-2$. (Indeed, any facet containing $w$ also contains $z$.) Combining this with the fact that $\sigma(w)=\sigma_1(G)$ is of dimension $d-2$ (by assumption) we get that $E$ meets condition (QG). The claim follows.
\end{proof}

We can finally provide the proof of Theorem \ref{thm:charaQuasi}, i.e., the desired characterization of local $h$-vectors of quasi-geometric subdivisions.\\

\begin{proof}[Proof of Theorem~\ref{thm:charaQuasi}]
The ``only if''-part follows directly from Theorem~\ref{thm:sub_main} (b). 
We now show the ``if''-part. If $d$ is even we start with a $2$-simplex and take $\ell_{\frac{d}{2}}$ times the stellar subdivision (O1) of a facet. Similarly, if $d$ is odd, we start with the $3$-simplex and take the stellar subdivision (O1) of a facet $\ell_{\frac{d-1}{2}}$ times.
In the next step, we apply once (O3) and then $\ell_{\lfloor\frac{d}{2}\rfloor -1}$ times the operation defined in Lemma~\ref{lem:O4}. We continue in this way and, by Lemmas~\ref{lem:operations_gq} and \ref{lem:O4}, this yields a quasi-geometric subdivision $\Gamma$ of the $(d-1)$-simplex whose local $h$-vector is equal to $\ell$.
\end{proof}

Chan and Stanley originally conjectured that all local $h$-vectors of quasi-geometric subdivisions are unimodal. However,  Athanasiadis disproved this conjectured by providing a counterexample to it (\cite[Example~3.4]{athanasiadis2012flag} and \cite{at_survey}). This example is obtained by applying the operation defined in Lemma \ref{lem:O4} to the $3$-simplex (see Figure~\ref{fig:quasigeom}) and has local $h$-vector $(0,1,0,1,0)$.
\begin{figure}[h]
\includegraphics[scale=0.5]{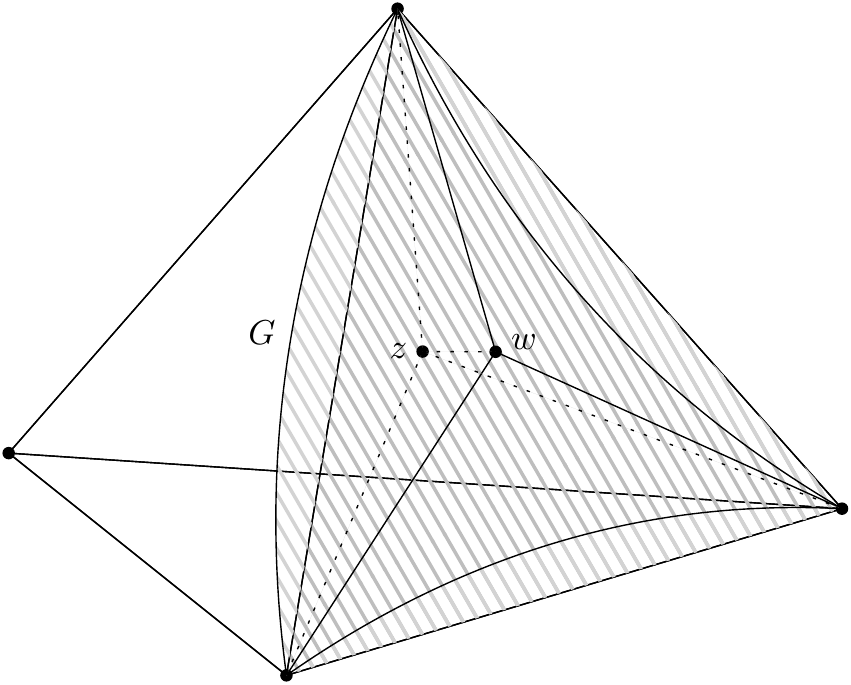}
\caption{Quasi-geometric subdivision with non-unimodal $h$-polynomial.}
\label{fig:quasigeom}
\end{figure}
 Nevertheless, no geometric or even just vertex-induced subdivisions of the $(d-1)$-simplex are known whose local $h$-vector is not unimodal. Already, Athanasiadis in \cite[Question 3.5]{athanasiadis2012flag} asked if such examples exist or if all vertex-induced subdivision of the $(d-1)$-simplex have unimodal local $h$-vector. Based on a lot of experiments and a great vain effort to construct counterexamples we are inclined to believe that the latter is indeed the case.

\section{Local $\gamma$-vectors of barycentric subdivisions}\label{sec:bary}

In this section, we  provide the proof of Theorem \ref{thm:nonnegative}, i.e., we show that the local $\gamma$-vector of the barycentric subdivision of any CW-regular subdivision of a simplex is nonnegative. This answers Question 6.2 in \cite{athanasiadis2012flag} in the affirmative.\\

As an example of the type of subdivision we are interested in, Figure~\ref{fig:stellar_bary} depicts the barycentric subdivision of the stellar subdivision of the $2$-simplex.
\begin{figure}[h]
\includegraphics[scale=0.8]{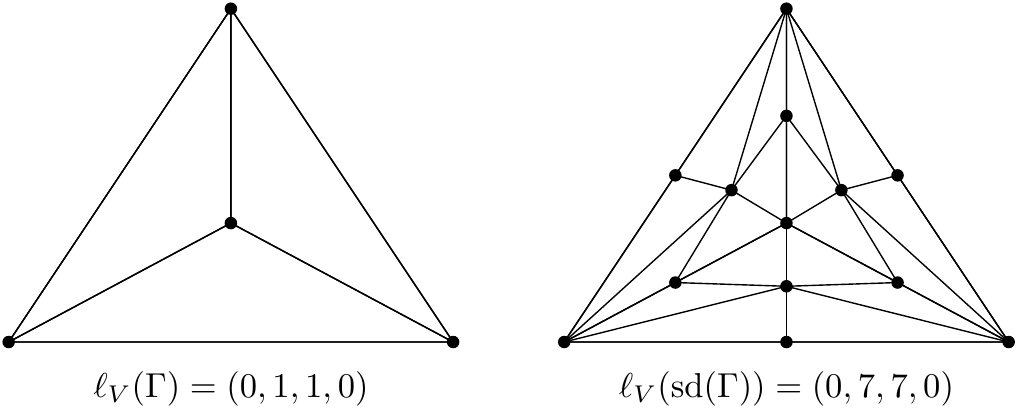}
\caption{Barycentric subdivision of the stellar subdivision.}
\label{fig:stellar_bary}
\end{figure}

\begin{lem}\label{lem:local_h}
Let $V$ be a finite set with $|V|=d$ and let $\Delta$ be a subdivision of the simplex $2^V$. The local $h$-polynomial of $\Delta$ can be written as
\begin{align*}
\ell_V(\Delta,x)=h(\Delta,x)-h(\partial \Delta,x)+\sum_{F\subsetneq V}\ell_F(\Delta_F,x)(x+\cdots+x^{d-|F|-1}).
\end{align*}
Here, $\partial \Delta$ denotes the boundary of $\Delta$. 
\end{lem}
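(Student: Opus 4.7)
The plan is to apply Stanley's subdivision formula (Theorem~\ref{thm:sub_main}(1)) twice and subtract. First, viewing $\Delta$ as a subdivision of $2^V$ and using that $\link_{2^V}(F)=2^{V\setminus F}$ is a simplex with $h$-polynomial equal to $1$, Stanley's formula collapses to
\begin{equation*}
h(\Delta,x)=\sum_{F\subseteq V}\ell_F(\Delta_F,x).
\end{equation*}
Isolating the $F=V$ summand already yields $\ell_V(\Delta,x)=h(\Delta,x)-\sum_{F\subsetneq V}\ell_F(\Delta_F,x)$, so my task reduces to rewriting this last sum in terms of $h(\partial\Delta,x)$ plus the asserted correction term.

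For the second application, I regard $\partial\Delta$ as a topological subdivision of the pure $(d-2)$-dimensional complex $\partial 2^V$. From the definition of carrier it is immediate that $(\partial\Delta)_F=\Delta_F$ for every proper face $F\subsetneq V$, since both sides consist precisely of the faces of $\Delta$ whose carrier is contained in $F$. Applying Theorem~\ref{thm:sub_main}(1) a second time therefore yields
\begin{equation*}
h(\partial\Delta,x)=\sum_{F\subsetneq V}\ell_F(\Delta_F,x)\cdot h(\link_{\partial 2^V}(F),x).
\end{equation*}
For each $F\subsetneq V$ one checks directly that $\link_{\partial 2^V}(F)=\partial 2^{V\setminus F}$, the boundary of the simplex on vertex set $V\setminus F$, whose $h$-polynomial is the standard $1+x+\cdots+x^{d-|F|-1}$.

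Subtracting the two displayed identities and collecting $\ell_F(\Delta_F,x)$-contributions gives
\begin{equation*}
h(\Delta,x)-h(\partial\Delta,x)=\ell_V(\Delta,x)-\sum_{F\subsetneq V}\ell_F(\Delta_F,x)\bigl(x+x^2+\cdots+x^{d-|F|-1}\bigr),
\end{equation*}
which rearranges to the claimed identity; the degenerate case $|F|=d-1$ contributes an empty parenthetical sum, exactly as in the statement. No step poses a genuine obstacle: the argument is essentially two applications of Stanley's formula, combined with the well-known $h$-polynomials of simplices and their boundaries and the easy identification $(\partial\Delta)_F=\Delta_F$.
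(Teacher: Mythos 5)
Your proposal is correct and follows essentially the same route as the paper: apply Stanley's formula (Theorem~\ref{thm:sub_main}(1)) once to $\Delta$ as a subdivision of $2^V$ and once to $\partial\Delta$ as a subdivision of $\partial 2^V$, use that $h(\link_{2^V}(F),x)=1$ and $h(\link_{\partial 2^V}(F),x)=1+x+\cdots+x^{d-|F|-1}$, and subtract. The only difference is cosmetic: you make explicit the check that $(\partial\Delta)_F=\Delta_F$ for $F\subsetneq V$, which the paper uses silently.
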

\begin{proof}
First, note that for any $F\subsetneq V$ the link of $F$ in $2^V$ respectively in $\partial(2^V)$ is a $(|V|-|F|-1)$-simplex respectively its boundary. Hence, 
\begin{equation*}
h(\link_{2^V}(F),x)=1 \quad \mbox{and} \quad h(\link_{\partial(2^V)}(F),x)=1+x+\cdots+x^{d-|F|-1}.
\end{equation*} 

Applying (\ref{eq:local_h_formula}) to $\Delta$, we obtain
\begin{align}
h(\Delta,x)&=\sum_{F\subseteq V}\ell_F(\Delta_F,x)h(\link_{2^V}(F),x)\\
&=\ell_V(\Delta,x)+\sum_{F\subsetneq V}\ell_F(\Delta_F,x).\nonumber
\end{align}
Similarly, viewing $\partial \Delta$ as a subdivision of $\partial(2^V)$ and using that $(\partial \Delta)_F=\Delta_F$ for $F \subsetneq V$, 
the $h$-polynomial of $\partial\Delta$ can be written in the following way:
\begin{align}
h(\partial \Delta,x)&=\sum_{F\subsetneq V}\ell_F(\Delta_F,x)h(\link_{\partial (2^V)}(F),x)\label{eq:boundary}\\
&=\sum_{F\subsetneq V}\ell_F(\Delta_F,x)(1+x+\cdots+x^{d-|F|-1}).\nonumber
\end{align}
Subtracting (4.2) from (4.1) yields
\begin{equation*}
\ell_V(\Delta,x)=h(\Delta,x)-h(\partial\Delta,x)+\sum_{F\subsetneq V}\ell_F(\Delta_F,x)(x+\cdots+x^{d-|F|-1}),
\end{equation*}
as desired.
\end{proof}
Using \eqref{eq:local_h_bary} and the fact that $\sd(2^V)$ has the same $h$-polynomial as its boundary 
(since it is just the cone over it), we get the following  recurrence formula for the derangement polynomials as a special case of Lemma \ref{lem:local_h} when $\Delta=\sd(2^V)$. We recall that $\df_0(x)=1$ by definition.

\begin{cor}\label{cor:dn}
For every $d\in\NN$,
\begin{equation*}
\df_d(x)=\sum_{k=0}^{d-2}\binom{d}{k}\df_k(x)(x+\cdots+x^{d-1-k}).
\end{equation*}
\end{cor}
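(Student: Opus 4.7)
The plan is to specialize Lemma~\ref{lem:local_h} to $\Delta = \sd(2^V)$ with $|V| = d$, and then translate every local $h$-polynomial appearing in the resulting identity into a derangement polynomial via equation~\eqref{eq:local_h_bary}.

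First I would identify the restrictions $\Delta_F$. A face of $\sd(2^V)$ is a chain of nonempty subsets of $V$, and under the subdivision map of Section~\ref{sect:CW} its carrier is the maximum of the chain. The chain therefore lies in $\Delta_F$ exactly when its maximum is contained in $F$, which is the case iff every set in the chain is contained in $F$. Hence $(\sd(2^V))_F = \sd(2^F)$, and \eqref{eq:local_h_bary} gives $\ell_F(\Delta_F,x) = \df_{|F|}(x)$; in particular the left-hand side of Lemma~\ref{lem:local_h} is $\ell_V(\Delta,x) = \df_d(x)$.

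Second, I would argue that the terms $h(\Delta,x) - h(\partial \Delta, x)$ cancel. The vertex of $\sd(2^V)$ corresponding to the top element $V$ of the face poset lies in every maximal chain of nonempty subsets of $V$, and hence in every facet of $\sd(2^V)$. Thus $\sd(2^V)$ is the cone with apex $V$ over the subcomplex of chains of proper subsets, which is $\sd(\partial 2^V) = \partial \Delta$. Coning leaves the $h$-polynomial unchanged, so $h(\Delta,x) = h(\partial \Delta, x)$.

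Plugging both observations into Lemma~\ref{lem:local_h} and grouping the sum over $F \subsetneq V$ according to $k = |F|$ (which contributes a factor $\binom{d}{k}$) yields
\begin{equation*}
\df_d(x) = \sum_{k=0}^{d-1} \binom{d}{k}\, \df_k(x)\,(x + \cdots + x^{d-1-k}).
\end{equation*}
The $k=d-1$ term drops out because the geometric-series factor is an empty sum when $d-1-k=0$, producing exactly the stated identity. The argument is essentially a formal translation of Lemma~\ref{lem:local_h}; the only nonbookkeeping step is the cone observation, which the text preceding the corollary already signals as the key input.
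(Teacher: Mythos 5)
Your proof is correct and follows exactly the route the paper indicates in the sentence preceding Corollary~\ref{cor:dn}: specialize Lemma~\ref{lem:local_h} to $\Delta=\sd(2^V)$, use~\eqref{eq:local_h_bary} to turn each $\ell_F(\sd(2^F),x)$ into $\df_{|F|}(x)$, use the cone observation to cancel $h(\Delta,x)-h(\partial\Delta,x)$, and group subsets by cardinality. The only detail you spell out that the paper leaves implicit is the vanishing of the $k=d-1$ term, which you handle correctly.
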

To the best of our knowledge this formula seems to be new, as we could not find it in the literature. Note that it directly implies the unimodality of $\df_n(x)$.

\begin{rem}
We also found a purely combinatorial proof of Corollary~\ref{cor:dn} using a similar recurrence formula for the Eulerian polynomials. This proof will appear in the PhD thesis of the third author.
\end{rem}
The next theorem is crucial in the proof of Theorem \ref{thm:nonnegative}.
\begin{thm}\label{thm:formulaoflocalh}
Let $V\neq \emptyset$ be a finite set and let $\Delta$ be a subdivision of the simplex $2^V$. The local $h$-polynomial of $\Delta$ can be written as
\begin{equation}\label{eq:nonnegative}
\ell_V(\Delta,x)=\sum_{F\subseteq V}\left[h(\Delta_F,x)-h(\partial(\Delta_F),x)\right]\cdot \df_{|V\setminus F|}(x).
\end{equation}
\end{thm}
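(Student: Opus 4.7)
The plan is to prove the identity by induction on $d = |V|$, combining Lemma~\ref{lem:local_h} with Corollary~\ref{cor:dn}. The base case $V = \emptyset$ (or $|V| = 1$) is checked directly, using $\df_0(x) = 1$, $\df_1(x) = 0$, and the convention that the boundary of the $(-1)$-ball $\{\emptyset\}$ is void.

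For the inductive step I would apply Lemma~\ref{lem:local_h} to $\ell_V(\Delta,x)$ and substitute the inductive hypothesis into each factor $\ell_F(\Delta_F,x)$ with $F \subsetneq V$. The key structural fact is that for $E \subseteq F \subseteq V$ the double restriction $(\Delta_F)_E$ equals $\Delta_E$ (immediate from unravelling the carrier map). Hence the inductive hypothesis reads
\[
\ell_F(\Delta_F, x) = \sum_{E \subseteq F}\bigl[h(\Delta_E,x) - h(\partial \Delta_E,x)\bigr]\df_{|F\setminus E|}(x).
\]
Plugging this into Lemma~\ref{lem:local_h} and switching the order of summation gives
\[
\ell_V(\Delta,x) = h(\Delta,x) - h(\partial\Delta,x) + \sum_{E \subsetneq V}\bigl[h(\Delta_E,x) - h(\partial\Delta_E,x)\bigr]\cdot S_E(x),
\]
where
\[
S_E(x) := \sum_{E \subseteq F \subsetneq V} \df_{|F \setminus E|}(x)\,(x + x^2 + \cdots + x^{d - |F| - 1}).
\]

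Writing $e = |E|$ and grouping by $k = |F \setminus E|$ (so $|F| = e+k$ and $F$ ranges over all such supersets with $k \leq d - e - 1$), the sum becomes
\[
S_E(x) = \sum_{k=0}^{d-e-1}\binom{d-e}{k}\df_k(x)(x + \cdots + x^{d-e-k-1}).
\]
The top term $k = d - e - 1$ contributes $0$ since $(x + \cdots + x^0)$ is an empty sum, so $S_E(x)$ is exactly the right-hand side of Corollary~\ref{cor:dn} with $d$ replaced by $d - e$; therefore $S_E(x) = \df_{d-e}(x) = \df_{|V \setminus E|}(x)$. Reinstating the missing $E = V$ contribution, $[h(\Delta, x) - h(\partial \Delta, x)]\df_0(x)$, as the free terms outside the sum yields exactly the claimed formula.

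The main obstacle is spotting that the auxiliary sum $S_E(x)$ is precisely the expression evaluated by the derangement recurrence in Corollary~\ref{cor:dn}; once this match is recognised, the rest is double-sum bookkeeping. Minor care is required in the low-dimensional cases $|V \setminus E| \leq 1$ (where both $\df_{|V\setminus E|}$ and $S_E$ vanish), but these fall out automatically.
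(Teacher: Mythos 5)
Your proposal is correct and follows essentially the same route as the paper's proof: induction on $|V|$, using Lemma~\ref{lem:local_h} for the outer expansion, substituting the inductive hypothesis into each $\ell_F(\Delta_F,x)$, swapping the order of summation, and recognizing the inner sum as the right-hand side of Corollary~\ref{cor:dn}. The only difference is presentational — you explicitly record the compatibility $(\Delta_F)_E = \Delta_E$ and the vanishing of the top $k$-term, both of which the paper leaves implicit.
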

\begin{proof}
To simplify notation, we set
\begin{align*}
h_F(x)=h(\Delta_F,x)-h(\partial(\Delta_F),x).
\end{align*}
We show the claim by induction on $|V|$. If $|V|=1$, both sides in \eqref{eq:nonnegative} are equal to $0$ and the claim is trivially true. 

Assume $|V|\geq 2$. By Lemma~\ref{lem:local_h} and the induction hypothesis, we have
\begin{align*}
\ell_V(\Delta,x)=&h_V(x)+\sum_{F\subsetneq V}\left[\sum_{G\subseteq F}h_G(x)\cdot \df_{|F\setminus G|}(x) \right](x+\cdots+x^{|V|-|F|-1})\\
=&h_V(x)+\sum_{G\subsetneq V}h_G(x)\left[\sum_{G\subseteq F\subsetneq V} \df_{|F\setminus G|}(x) (x+\cdots+x^{|V|-|F|-1})\right]\\
=&h_V(x)+\sum_{G\subsetneq V}h_G(x)\left[\sum_{F\subsetneq V\setminus G}\df_{|F|}(x)(x+\cdots+x^{|V|-|G|-1-|F|})\right]\\
=&h_V(x)+\sum_{G\subsetneq V}h_G(x)\cdot \df_{|V\setminus G|}(x),
\end{align*}
where the last equality follows from Corollary~\ref{cor:dn}.
\end{proof}

\newpage

\begin{exmp}
We illustrate Theorem~\ref{thm:formulaoflocalh} using two examples. 
\begin{itemize}
\item[(1)] Let $\Delta=2^V$ be the trivial subdivision of the simplex $2^V$ with $|V|=d\geq 1$. Then $\ell_V(\Delta,x)=0$ and
\begin{equation*}
h(\Delta_F,x)-h(\partial(\Delta_F),x)= -x-x^2-\dots-x^{|F|-1} \quad \mbox{ for every }\quad \emptyset \ne F\subseteq V.
\end{equation*} 
However, $h(\Delta_\emptyset,x)-h(\partial(\Delta_\emptyset),x)=1$ and all negative terms on the right-hand side of \eqref{eq:nonnegative} cancel out. We retrieve the recurrence formula from Corollary~\ref{cor:dn}:
\begin{align*}
0=\ell_V(\Delta,x)&=\df_d(x)+\sum_{\emptyset \ne F\subseteq V}(-x-\dots-x^{|F|-1})\df_{|V\setminus F|}(x)\\
&=\df_d(x)+\sum_{k=2}^{d}\binom{d}{k}(-x-\dots-x^{k-1})\df_{d-k}(x)\\
&=\df_d(x)-\sum_{k=0}^{d-2}\binom{d}{k}(x+\dots+x^{d-k-1})\df_{k}(x).
\end{align*}
\item[(2)] Let $\Delta$ be the barycentric subdivision of the stellar subdivision of the $2$-simplex, as depicted in Figure~\ref{fig:stellar_bary}.
Then $\df_0(x)=1, \df_1(x)=0$ and
\begin{equation*}
h(\Delta,x)-h(\partial\Delta,x)=(1+10x+7x^2)-(1+4x+x^2)=6x+6x^2.
\end{equation*} 
The right-hand side of \eqref{eq:nonnegative} is 
\begin{align*}
&\sum_{F\subseteq V}\left[h(\Delta_F,x)-h(\partial(\Delta_F),x)\right]\cdot \df_{|V\setminus F|}(x)\\
=&6x+6x^2+\sum_{F\subseteq V, |F|\leq 1}\left[h(\Delta_F,x)-h(\partial(\Delta_F),x)\right]\cdot \df_{|V\setminus F|}(x)\\
=&6x+6x^2+\df_3(x)=7x+7x^2,
\end{align*}
which is indeed equal to the local $h$-polynomial of $\Delta$ (see Figure~\ref{fig:stellar_bary}).
\end{itemize}
\end{exmp}
We now prove Theorem \ref{thm:nonnegative}.
Let $\Gamma$ be a CW-regular subdivision of a simplex $2^V$.
Then, by applying the special case of Theorem \ref{thm:formulaoflocalh} when $\Delta=\sd(\Gamma)$, we obtain
\[
\ell_V(\sd(\Gamma),x)=\sum_{F \subseteq V} \big[ h(\sd(\Gamma_F),x) - h(\partial(\sd(\Gamma_F)),x) \big]
\cdot \df_{|V\setminus F|}(x).
\]
Since we already know that $\df_k(x)$ is $\gamma$-nonnegative and since the product of two $\gamma$-nonnegative polynomials is $\gamma$-nonnegative,
the next result due to Ehrenborg and Karu \cite{ehrenborg_karu}
completes the proof of Theorem \ref{thm:nonnegative}.

\begin{thm}[Ehrenborg--Karu]\label{cdindex}
Let $\Gamma$ be a regular CW-complex which is homeomorphic to a ball. Then $h(\sd(\Gamma),x)-h(\partial(\sd(\Gamma)),x)$ is $\gamma$-nonnegative.
\end{thm}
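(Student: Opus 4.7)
My plan is to reduce Theorem~\ref{cdindex} to Karu's nonnegativity theorem for the cd-index of regular CW-spheres, via the doubling construction.

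First, using $h(\Delta,x)=\sum_{F\in\Delta}x^{|F|}(1-x)^{d-|F|}$ and the standard bijection between faces of $\sd(\Gamma)$ and chains in the face poset $P(\Gamma)$ (the empty face corresponding to the empty chain), I would rewrite
\[
h(\sd(\Gamma),x)=\sum_{C}x^{|C|}(1-x)^{d-|C|},
\]
where $C$ ranges over chains in $P(\Gamma)$; analogously, $h(\partial(\sd(\Gamma)),x)=h(\sd(\partial\Gamma),x)$ is expressed via chains in $P(\partial\Gamma)$. Next, I would pass to the double $\widehat{\Gamma}=\Gamma\cup_{\partial\Gamma}\Gamma'$, a regular CW-sphere of dimension $d-1$: since any chain in $P(\widehat{\Gamma})$ must lie entirely in a single copy of $\Gamma$ (interior faces of the two copies are incomparable), a direct chain count yields
\[
h(\sd(\widehat{\Gamma}),x)=2\,h(\sd(\Gamma),x)-(1-x)\,h(\partial(\sd(\Gamma)),x),
\]
which rearranges to
\[
h(\sd(\Gamma),x)-h(\partial(\sd(\Gamma)),x)=\tfrac{1}{2}\bigl[h(\sd(\widehat{\Gamma}),x)-(1+x)\,h(\partial(\sd(\Gamma)),x)\bigr].
\]

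Second, I would interpret the right-hand side through cd-indices. For an Eulerian graded poset $Q$ of rank $n+1$ with cd-index $\Phi_Q(c,d)$, the specialization $c\mapsto 1+x$, $d\mapsto 2x$ sends each cd-monomial of degree $n$ to a positive multiple of the $\gamma$-basis element $x^k(1+x)^{n-2k}$; hence $\Phi_Q(1+x,2x)$ is $\gamma$-nonnegative whenever the cd-coefficients of $\Phi_Q$ are nonnegative, and both $h(\sd(\widehat{\Gamma}),x)$ and $h(\sd(\partial\Gamma),x)$ are of this form. Identifying $(1+x)h(\sd(\partial\Gamma),x)$ with the image under the same specialization of the ``pyramid'' cd-polynomial $c\cdot\Phi_{\partial\Gamma}(c,d)$, the theorem reduces to the cd-monotonicity statement
\[
\Phi_{\widehat{\Gamma}}(c,d)-c\cdot\Phi_{\partial\Gamma}(c,d)\succeq 0
\]
in the cd-basis.

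The main obstacle is precisely this cd-monotonicity, which is stronger than Karu's original sphere nonnegativity. Ehrenborg and Karu establish it via a relative version of intersection cohomology of the toric variety associated with $\Gamma$: the ``new'' IH classes contributed by the interior cells of $\Gamma$ provide the required nonnegative correction to the cd-index of the boundary sphere. Granting this, specializing $c\mapsto 1+x$, $d\mapsto 2x$ and dividing by two immediately yields the $\gamma$-nonnegativity of $h(\sd(\Gamma),x)-h(\partial(\sd(\Gamma)),x)$.
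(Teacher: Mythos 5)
Your proof is correct in substance, and it does take a genuinely different route from the paper, though both ultimately rest on the same key ingredient (Theorem~2.5 of Ehrenborg--Karu). The paper applies that theorem directly to the ball $\Gamma$: the face poset of $\Gamma$ is near-Gorenstein$^*$, so $\Psi_\Gamma(\mathbf a,\mathbf b)-\Psi_{\partial\Gamma}(\mathbf a,\mathbf b)\cdot\mathbf a$ is a nonnegative homogeneous $\mathbf{cd}$-polynomial; setting $\mathbf a=1$, $\mathbf b=x$ immediately gives the statement. You instead first double $\Gamma$ to a sphere $\widehat\Gamma$, verify (correctly) the chain-counting identity $h(\sd(\widehat{\Gamma}),x)=2\,h(\sd(\Gamma),x)-(1-x)\,h(\partial\sd(\Gamma),x)$, and then reduce to the $\mathbf{cd}$-inequality $\Phi_{\widehat{\Gamma}}\succeq \mathbf c\cdot\Phi_{\partial\Gamma}$. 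That detour is legitimate: one can check, using the flag-level doubling identity $\Psi_{\widehat\Gamma}=2\Psi_\Gamma-\Psi_{\partial\Gamma}(\mathbf a-\mathbf b)$, that $\Phi_{\widehat\Gamma}-\Phi_{\partial\Gamma}\cdot\mathbf c=2\bigl(\Psi_\Gamma-\Psi_{\partial\Gamma}\mathbf a\bigr)$ in $\mathbb Z\langle\mathbf c,\mathbf d\rangle$, so your $\mathbf{cd}$-monotonicity for the double is exactly twice the Ehrenborg--Karu quantity and hence nonnegative. Two small cautions. First, you wrote the inequality with left multiplication $\mathbf c\cdot\Phi_{\partial\Gamma}$; what one gets from the unfolding is $\Phi_{\partial\Gamma}\cdot\mathbf c$, and these differ as noncommutative polynomials. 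This is harmless for your purposes because the specialization $\mathbf c\mapsto 1+x$, $\mathbf d\mapsto 2x$ is commutative, but the noncommutative inequality you state may not be what Ehrenborg--Karu prove, so you should phrase the reduction in terms of $\Phi_{\partial\Gamma}\cdot\mathbf c$. Second, you attribute the monotonicity directly to Ehrenborg--Karu in the form you stated it; it would be cleaner and safer to derive it, as above, from their near-Gorenstein$^*$ theorem rather than assert it, since their paper phrases the result for the ball itself, not for the doubled sphere. Net comparison: the paper's proof is shorter and avoids the doubling construction; your proof buys a pleasant intermediate identity relating the $h$-polynomials of a ball, its boundary, and its double, but at the cost of an extra reduction step and a (fixable) imprecision about noncommutativity.
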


Theorem \ref{cdindex} is an immediate consequence of \cite[Theorem 2.5]{ehrenborg_karu}, but since this result is written in the language of $\mathbf{cd}$-indices, we explain how Theorem \ref{cdindex} can be deduced from it.
Before the proof, we recall flag $h$-numbers and $\mathbf a \mathbf b$-indices.
Let $\Gamma$ be a regular CW-complex of dimension $d-1$.
An \textbf{$S$-chain} of $\Gamma$, where $S \subseteq [d]=\{1,2,\dots,d\}$, is a chain of $\Gamma$
\[
\tau_0 \lneq \tau_1 \lneq \cdots \lneq \tau_i
\]
with $S=\{ \dim \tau_0+1,\dots,\dim \tau_i+1\}$.
Let $f_S(\Gamma)$ be the number of $S$-chains of $\Gamma$.
Then, for $S \subseteq [d]$, we define $h_S(\Gamma)$ by $h_S(\Gamma)= \sum_{T \subseteq S} (-1)^{|S|-|T|} f_T(\Gamma)$.
Note that one has 
\begin{equation*}
f_i(\sd(\Gamma))=\sum_{\substack{S\subseteq [d] \\ |S|=i+1}} f_S(\Gamma)\quad \text{and} \quad h_i(\sd(\Gamma))=\sum_{\substack{S\subseteq [d] \\ |S|=i}} h_S(\Gamma).
\end{equation*}
Let $\mathbb Z\langle \mathbf a, \mathbf b\rangle$ and
$\mathbb Z\langle \mathbf c, \mathbf d\rangle$ be noncommutative polynomial rings, where $\mathbf {a},\mathbf {b},\mathbf {c},\mathbf {d}$ are variables with $\deg \mathbf a=\deg \mathbf b = \deg \mathbf c=1$ and $\deg \mathbf d=2$.
We say that a polynomial $f \in \mathbb Z \langle \mathbf c, \mathbf d\rangle$ is \emph{nonnegative} if all coefficients of monomials in $f$ are nonnegative.
For $S \subseteq [d]$, we define the noncommutative monomial $u_S=u_1u_2\cdots u_d \in \mathbb Z \langle \mathbf a, \mathbf b \rangle$ by
$u_i=\mathbf a$ if $i \not \in S$ and $u_i=\mathbf b$ if $i \in S$.
The homogeneous polynomial 
\[
\Psi_\Gamma(\mathbf a, \mathbf b) = \sum_{S \subseteq [d]} h_S(\Gamma) u_S
\]
is called the \emph{$\mathbf {ab}$-index} of $\Gamma$.
Note that by substituting $\mathbf a=1$ to $\Psi_\Gamma(\mathbf a ,\mathbf b)$
we obtain the $h$-polynomial of $\sd(\Gamma)$, that is,
$\Psi_\Gamma (1,\mathbf b)= \sum_{i=0}^d h_i(\sd(\Gamma)) \mathbf b^i$.

\begin{proof}[Proof of Theorem \ref{cdindex}]
Let $\Gamma$ be a regular CW-complex which is homeomorphic to a $(d-1)$-dimensional ball.
Then the face poset of $\Gamma$ is near-Gorenstein* in the sense of \cite[Definition 2.2]{ehrenborg_karu} (indeed, the near-Gorenstein* property is an abstraction of being a ball), and \cite[Theorem 2.5]{ehrenborg_karu} says that there is a nonnegative homogeneous polynomial $\Phi(\mathbf c, \mathbf d) \in \mathbb Z\langle \mathbf c, \mathbf d\rangle$ of degree $d$ such that
\[
\Phi(\mathbf a +\mathbf b, \mathbf a\mathbf b + \mathbf b \mathbf a)= \Psi_\Gamma(\mathbf a, \mathbf b) - \Psi_{\partial \Gamma}(\mathbf a, \mathbf b) \cdot \mathbf a.\]
By substituting $\mathbf a=1$ in the above equation,
we see that
\begin{align}
\label{4-A}
\Phi(1+ \mathbf b, 2 \mathbf b)= \Psi_\Gamma (1, \mathbf b)- \Psi_{\partial \Gamma} (1,\mathbf b)
= \sum_{i=0}^d \big( h_i(\sd(\Gamma))-h_i (\partial (\sd(\Gamma)) ) \big) \mathbf b^i
\end{align}
coincides with the polynomial $h(\sd(\Gamma),\mathbf b)-h(\partial (\sd(\Gamma)),\mathbf b)$.
On the other hand, since $\Phi(\mathbf c,\mathbf d)$ is homogeneous and nonnegative, there exist nonnegative integers $\gamma_0,\gamma_1,\gamma_2,\dots$ such that
\begin{align}
\label{4-B}
\Phi(1+ \mathbf b, 2 \mathbf b)=
\sum_{k=0}^{\lfloor d/2 \rfloor}
\gamma_k (1+ \mathbf b)^{d-2k} (2 \mathbf b)^k.
\end{align}
The two equations \eqref{4-A} and \eqref{4-B} guarantee the $\gamma$-nonnegativity of $h(\sd(\Gamma),\mathbf b)-h(\partial (\sd(\Gamma)),\mathbf b)$.
\end{proof}

As $\gamma$-nonnegativity implies unimodality, we obtain the following immediate corollary.

\begin{cor}
Let $\Gamma$ be a CW-regular subdivision of a simplex $2^V$. Then $\ell_V(\sd(\Gamma))$ is unimodal.
\end{cor}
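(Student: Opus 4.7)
The plan is to deduce this corollary directly from Theorem~\ref{thm:nonnegative} together with the elementary fact that every $\gamma$-nonnegative polynomial with nonnegative $\gamma$-coefficients is unimodal. By Theorem~\ref{thm:nonnegative}, the local $\gamma$-vector $\xi_V(\sd(\Gamma))=(\xi_0,\ldots,\xi_{\floor{d/2}})$ has $\xi_k \geq 0$ for all $k$, so
\[
\ell_V(\sd(\Gamma),x)=\sum_{k=0}^{\floor{d/2}}\xi_k\, x^k(1+x)^{d-2k}.
\]

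Next, I would observe that each basis polynomial $x^k(1+x)^{d-2k}$ is itself symmetric with center $d/2$ and has a unimodal coefficient sequence. Expanding gives $x^k(1+x)^{d-2k}=\sum_{j=0}^{d-2k}\binom{d-2k}{j}x^{k+j}$, whose nonzero coefficients $\binom{d-2k}{j}$ form a row of Pascal's triangle and are thus symmetric and unimodal; multiplication by $x^k$ simply shifts this row so that its center of symmetry is $d/2$, matching the center of the whole polynomial $\ell_V(\sd(\Gamma),x)$.

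Finally, I would invoke the standard observation that a nonnegative linear combination of symmetric and unimodal polynomials that share the same center of symmetry is again symmetric and unimodal. Applying this to the displayed expansion with weights $\xi_k \geq 0$ yields the unimodality of $\ell_V(\sd(\Gamma),x)$. Since the nontrivial ingredient (Theorem~\ref{thm:nonnegative}) has already been established, there is no real obstacle: the argument is just a formal consequence, which is why the authors present it as an immediate corollary.
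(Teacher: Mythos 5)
Your proof is correct and is exactly the argument the paper intends: the authors invoke the well-known fact that $\gamma$-nonnegativity implies unimodality, and your write-up simply spells out why that fact holds (each basis polynomial $x^k(1+x)^{d-2k}$ is symmetric and unimodal with center $d/2$, and a nonnegative combination of such polynomials remains unimodal). Same approach, just with the standard lemma unpacked.
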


\begin{rem}
The polynomials $h(\Delta_F,x)-h(\partial (\Delta_F),x)$ in Theorem \ref{thm:formulaoflocalh} are always symmetric.
Indeed, if $\Delta$ is a simplicial complex which is homeomorphic to a $(d-1)$-ball with $h(\Delta)=(h_0,h_1,\dots,h_d)$, then $h_d=0$ and $h_i(\partial \Delta)= \sum_{j=0}^i (h_j-h_{d-j})$ for all $i$
(see \cite[p.\ 137]{sta_cca}). Hence
\begin{align}
\label{rem:last}
h_i(\Delta)-h_i(\partial \Delta)= \sum_{j=0}^{i-1} (h_j-h_{d-j-1}),
\end{align}
and therefore $h_i(\Delta)-h_i(\partial \Delta)=h_{d-i}(\Delta)-h_{d-i}(\partial \Delta)$
for all $i=0,1,\dots,d$.
Moreover, \eqref{rem:last} says that if $h_j \geq h_{d-j-1}$ holds for $j < d/2$, then $h(\Delta,x)-h(\partial \Delta,x)$ is unimodal.

For example, it follows from \cite[Theorem 1.2]{BeckStapledon} that, if $\Delta$ is the $r$\textsuperscript{th} edgewise subdivision of any topological subdivision of a simplex and if $r$ is sufficiently large, then we have $h_j(\Delta_F)\geq h_{|F|-j-1}(\Delta_F)$ for $j<|F|/2$ (see \cite[\S 6]{brunRomer} for a connection between edgewise subdivisions and Veronese subrings). As explained above, this implies the unimodality of $h(\Delta_F,x)-h(\partial (\Delta_F),x)$, and hence the unimodality of the local $h$-polynomial of $\Delta$ by Theorem \ref{thm:formulaoflocalh}.
\end{rem}

Although the polynomials $h(\Delta_F,x)-h(\partial (\Delta_F),x)$ may have negative coefficients in general, the previous remark suggests to study the following problem:

\begin{prob}
 Find classes of subdivisions $\Delta$ such that $h(\Delta_F,x)-h(\partial (\Delta_F),x)$ is nonnegative, unimodal or $\gamma$-nonnegative. Moreover, for those classes try to find a combinatorial interpretation of the coefficients of $h(\Delta_F,x)-h(\partial (\Delta_F),x)$ respectively the coefficients of its $\gamma$-polynomial.
\end{prob}

Possible subdivisions one might consider include chromatic subdivisions (see \cite{Kozlov}), interval subdivisions (see \cite{Walker}) or partial barycentric subdivisions (see \cite{AhmadWelker}). With respect to the second part, one could e.g., study the barycentric subdivision of the cubical barycentric subdivision of the simplex (see \cite[Remark 4.5]{at_survey}) which belongs to the class of subdivisions considered in Theorem  \ref{thm:nonnegative} (this question was raised by Christos Athanasiadis).

In personal communication with Christos Athanasiadis, he asked us if Theorem \ref{thm:nonnegative} can be extended to relative local $\gamma$-vectors (see \cite{at_survey} for the precise definition). We include this question as a reference for future research.

\begin{ques}\label{qu:relative}
Given a CW-regular subdivision $\Gamma$ of a simplex and $E\in \Gamma$, is the relative local $\gamma$-vector of the barycentric subdivision $\sd(\Gamma)$ of $\Gamma$ at $E$ nonnegative?
\end{ques}

\section*{Acknowledgment}
 We wish to thank Christos Athanasiadis for his useful comments. The third author wants to express his gratitude to Isabella Novik for her hospitality at the University of Washington and interesting discussions about the subject.

\bibliographystyle{plain}
\bibliography{local_h_lit}

\end{document}